\documentclass[final,notitlepage,12pt,reqno]{amsart}
\usepackage{graphicx}
\usepackage{calc}
\usepackage{url}
\newlength{\depthofsumsign}
\setlength{\depthofsumsign}{\depthof{$\sum$}}

\makeatletter
\let\I\@undefined
\makeatother
\setlength{\emergencystretch}{0em}

\usepackage{geometry}
\geometry{body={17.5cm,24.5cm}}

\usepackage{indentfirst}
\usepackage[normalem]{ulem}
\usepackage{float}
\usepackage{amsthm}

\usepackage{enumitem}[2011/09/28]
\setenumerate{align=left, leftmargin=0pt,labelsep=.5em, labelindent=0\parindent,listparindent=\parindent,itemindent=*}
\usepackage{array}
\usepackage{empheq}
\usepackage{natbib}
\setlength{\bibsep}{0.0pt}
\usepackage[all]{xy}

\usepackage{multirow}

\usepackage{caption}[2012/02/19]

\usepackage{longtable,lscape}

\captionsetup[figure]{font=small}
\captionsetup[table]{font=small}
\setlength{\LTcapwidth}{\textwidth}

\usepackage{fouriernc}
\usepackage{fourier}
\usepackage{amssymb,bm,amsmath}
\usepackage{amsfonts}

\usepackage[OT2,T1,T2A]{fontenc}

\usepackage[english,french,german,,romanian,russian]{babel}
\usepackage{appendix}

\DeclareMathOperator{\Int}{Int}

\DeclareMathOperator{\D}{d}
\DeclareMathOperator{\I}{Im}
\DeclareMathOperator{\R}{Re}

\def\XXint#1#2#3{{\setbox0=\hbox{$#1{#2#3}{\int}$}
     \vcenter{\hbox{$#2#3$}}\kern-.5\wd0}}

\bibpunct{[}{]}{,}{n}{}{;}

\def\eor{\hfill$ \square$}

\theoremstyle{plain}
\newtheorem{theorem}{Theorem}[section]

\newtheorem{lemma}[theorem]{Lemma}

\newenvironment{remark}[1][Remark]{\begin{trivlist}
\item[\hskip \labelsep {\bfseries #1}]}{\end{trivlist}}

\theoremstyle{definition}

\numberwithin{equation}{section}

\setcounter{tocdepth}{4}
\hyphenation{Kon-tse-vich}
\hyphenation{Zagier}
\hyphenation{Fuchs-ian}

\usepackage{color}

\begin{document}

\selectlanguage{english}
\title[Ramanujan Series, Epstein Zeta Functions]{Ramanujan Series for Epstein Zeta Functions}
\author{Yajun Zhou
}
\address{
}
\email{\vspace{2em}yajunz@math.princeton.edu}
\date{\today}

\maketitle

\begin{abstract}
    In the spirit of Ramanujan, we derive exponentially fast convergent series for Epstein zeta functions  $ E^{\varGamma_0(N)}(z,s)$ on the Hecke congruence groups $ \varGamma_0(N),N\in\mathbb Z_{>0}$, where $z$ is an arbitrary point in the upper half-plane  $ \mathfrak H$, and $s\in\mathbb Z_{>1}$. These Ramanujan series can be reformulated as integrations of modular forms, in the framework of Eichler integrals. Particular cases of    these Eichler integrals recover part of the recent results reported by Wan and Zucker (\texttt{arXiv:1410.7081v1}). \\\\\textit{Keywords}: Epstein zeta functions, automorphic Green's functions, Eichler integrals, elliptic integrals\\\\\textit{Subject Classification (AMS 2010)}:          11E45, 11M06, 33C75, 33E05\end{abstract}



\section{Introduction and Statement of Results}

Following Gross and Zagier \cite[][p.~239, Eq.~2.14]{GrossZagierI}, we define the Epstein zeta function on the Hecke congruence groups $ \varGamma_0(N):=\left\{ \left.\left(\begin{smallmatrix}a&b\\c&d\end{smallmatrix}\right)\right| a,b,c,d\in\mathbb Z; ad-bc=1;c\equiv0\pmod N\right\},N\in\mathbb Z_{>1}$ as \begin{align}
E^{\varGamma_0(N)}(z,s):=\sum_{\hat\gamma\in\left.\left(\begin{smallmatrix}*&*\\0&*\end{smallmatrix}\right)\right\backslash\varGamma_0(N)}[\I(\hat\gamma z )]^{s},\quad z\in\mathfrak H,\R s>1.
\end{align}Here, $ \mathfrak H=\{z\in\mathbb C|\I z>0\}$ is the upper half-plane.
 The Epstein zeta function appears in the  asymptotic expansion for the corresponding automorphic Green's function (see \cite[][p.~240, Eq.~2.19]{GrossZagierI} or \cite[][p.~39, Eq.~6.5]{Hejhal1983}) \begin{align}
G_{s}^{\mathfrak H/\overline{\varGamma}_0(N)}(z_{1},z_{2})=\frac{4\pi}{1-2s}\frac{E^{\varGamma_0(N)}(z_{2},s)}{(\I z_1)^{s-1}}+O(e^{-(2\pi-0^{+})\I z_1}),\quad \R s>1,\I z_1\to+\infty.\label{eq:G_HeckeN_asympt}
\end{align} Here, the automorphic Green's function $ G_{s}^{\mathfrak H/\overline{\varGamma}_0(N)}(z_{1},z_{2})$ of level $N\in\mathbb Z_{>0}$ and weight $ 2s$ is defined as (\cite[][p.~207]{GrossZagier1985}, \cite[][pp.~238--239]{GrossZagierI} and \cite[][p.~544]{GrossZagierII})\begin{align}
G_{s}^{\mathfrak H/\overline{\varGamma}_0(N)}(z_{1},z_{2}):=-\sum_{\substack{a,b,c,d\in\mathbb Z\\ N\mid c,ad-bc=1}}Q_{s-1}
\left( 1+\frac{\left\vert z_{1} -\frac{a z_2+b}{cz_{2}+d}\right\vert ^{2}}{2\I z_1\I\frac{a z_2+b}{cz_{2}+d}} \right),\quad z_1\notin\varGamma_0(N)z_2,
\end{align}where    $Q_\nu $ is the Legendre function of the second kind $ Q_\nu(t):=\int_0^{\infty}(t+\sqrt{t^2-1}\cosh u)^{-\nu-1}\D u,t>1,\R\nu>-1$.

The Epstein zeta function is also known as the real-analytic Eisenstein series. The namesake is best interpreted by the following identity \cite[][p.~207]{GrossZagier1985}:\begin{align}
E^{\varGamma_0(1)}(z,s)\equiv E^{SL(2,\mathbb Z)}(z,s)=\frac{1}{2\zeta(2s)}\sum_{\substack{m,n\in\mathbb Z\\m^2+n^2\neq0}}\frac{(\I z)^s}{|mz+n|^{2s}},
\end{align}which is reminiscent of the complex-analytic Eisenstein series $ E_k$ of weight $k$:\begin{align} E_k(z):=\frac{1}{2\zeta(k)}\sum_{\substack{m,n\in\mathbb Z\\m^2+n^2\neq0}}\frac{1}{(mz+n)^{k}},\quad k\in\mathbb Z_{>2}.\label{eq:E_k_defn}\end{align} Furthermore, $ E^{\varGamma_0(1)}(z,s)\equiv E^{SL(2,\mathbb Z)}(z,s)$ is the building block for all the Epstein zeta functions $E^{\varGamma_0(N)}(z,s) $ of higher levels $ N\in\mathbb Z_{>1}$, in that \cite[][p.~240, Eq.~2.16]{GrossZagierI}:\begin{align}
E^{\varGamma_0(N)}(z,s)=\frac{1}{N^{s}\prod_{p\mid N}(1-p^{-2s})}\sum_{d\mid N}\frac{\mu(d)}{d^s}E^{\varGamma_0(1)}\left( \frac{Nz}{d},s \right),\label{eq:Epstein_HeckeN_expn}
\end{align} where the product $ \prod_{p\mid N}$ (resp.~the sum $ \sum_{d|N}$) is taken over prime (resp.~positive) divisors of $N$, and $ \mu(\cdot)$ is the M\"obius function satisfying  $ \sum_{n=1}^\infty\mu(n)n^{-s}=1/\zeta(s)$ for $ \R s>1$.

For certain CM points $ z$ (where $[\mathbb Q(z):\mathbb Q]=2 $), the values for the Epstein zeta function $ E^{\varGamma_0(1)}(z,s)$ are known to be connected to  the $ L$-functions of non-trivial Dirichlet characters \cite{Berndt1992,Williams1999}. The simplest example is \cite[][Eq.~2.1.35]{AGF_PartI}\begin{align}
E^{\varGamma_0(1)}(i,s)=\frac{2\zeta(s)L(s,\chi_{-4})}{\zeta(2s)}=\frac{2\zeta(s)}{\zeta(2s)}\sum_{n=0}^\infty\frac{(-1)^n}{(2n+1)^s},\quad \R s>1.\label{eq:Epstein_SL2Z_i_s}
\end{align}Here, $ L(2,\chi_{-4})$ is equal to Catalan's constant  $ G=\sum_{n=0}^\infty(-1)^n(2n+1)^{-2}$.

Amongst our modest goals in this note is to construct rapidly convergent series representations for the Epstein zeta functions $E^{\varGamma_0(N)}(z,s),z\in\mathfrak H,s\in\mathbb Z_{>1}$, drawing inspirations from an entry   \cite[][p.~276]{RN2} in  Ramanujan's second notebook, which presented exponentially fast convergent series for special values of the Riemann zeta function at odd positive integers. Our formulae for Epstein zeta functions  are essentially non-holomorphic derivatives of the aforementioned  series expansion due to Ramanujan \cite[][p.~276]{RN2}, so we will refer to them as ``Ramanujan series''. In \S\S\ref{sec:Epstein_SL2Z}--\ref{sec:Epstein_Hecke4},  we prove the results stated in the  theorem below.\begin{theorem}[Ramanujan Series for Epstein Zeta Functions]\label{thm:Ramanujan_series}\begin{enumerate}[label=\emph{(\alph*)}, ref=(\alph*), widest=a]\item\label{itm:Epstein_SL2Z} For $ z\in\mathfrak H$ and $ m\in\mathbb Z_{>0}$, we have the following infinite series representation of the Epstein zeta function $ E^{\varGamma_0(1)}(z,m+1)$:\begin{align}
E^{\varGamma_0(1)}(z,m+1)={}&(\I z)^{m+1}+\frac{\sqrt{\pi}\Gamma(m+\frac{1}{2})}{\Gamma(m+1)}\frac{\zeta(2m+1)}{\zeta(2m+2)}\frac{1}{(\I z)^{m}}\notag\\{}&-\frac{\pi(\I z)^{m}}{(-2)^{m-1}\Gamma(m+1)\zeta(2m+2) }\left( \frac{\partial}{\partial\I z}\frac{1}{\I z} \right)^{m}\R\sum_{n=1}^\infty\frac{1}{n^{2m+1}}\frac{1}{e^{2n\pi\frac{z}{i}}-1}.\label{eq:Epstein_SL2Z_Ramanujan_series_repn}
\end{align}\item\label{itm:Epstein_Hecke4} For $ z\in\mathfrak H$ and $ m\in\mathbb Z_{>0}$, we have the following infinite series representation of the Epstein zeta function $ E^{\varGamma_0(4)}(-1/(4z),m+1)$:\begin{align}&
E^{\varGamma_0(4)}\left( -\frac{1}{4z},m+1 \right)\notag\\={}&\frac{2^{2m+1}-1}{2^{2m+2}-1}\frac{\sqrt{\pi}\Gamma(m+\frac{1}{2})}{2\Gamma(m+1)}\frac{\zeta(2m+1)}{\zeta(2m+2)}\frac{1}{(4\I z)^m}\notag\\{}&+\frac{\pi(\I z)^{m}}{(-2)^{m-1}(2^{2m+2}-1)\Gamma(m+1)\zeta(2m+2)}\left( \frac{\partial}{\partial\I z}\frac{1}{\I z} \right)^{m}\R\sum_{n=0}^\infty\frac{1}{(2n+1)^{2m+1}}\frac{1}{e^{(2n+1)\pi\frac{2z+1}{i}}+1}.\label{eq:Epstein_Hecke4_Ramanujan_series_repn}
\end{align}\end{enumerate}\end{theorem}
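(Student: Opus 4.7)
My plan is to deduce Theorem~\ref{thm:Ramanujan_series} from the classical Chowla--Selberg (Fourier) expansion of $E^{\varGamma_0(1)}(z,s)$. Part~(a) emerges directly upon specialization to $s=m+1$, where the Macdonald function $K_{m+1/2}$ becomes elementary and its polynomial factor coincides with the action of an iterated differential operator on a pure exponential. Part~(b) will then follow from~(a) via the M\"obius decomposition~\eqref{eq:Epstein_HeckeN_expn} together with the $SL(2,\mathbb Z)$-invariance of $E^{\varGamma_0(1)}$.

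For~(a), I would start from the Fourier expansion
\begin{align*}
E^{\varGamma_0(1)}(z,s)={}&(\I z)^{s}+\frac{\sqrt{\pi}\,\Gamma(s-\tfrac12)\zeta(2s-1)}{\Gamma(s)\zeta(2s)}(\I z)^{1-s}\\
{}&+\frac{4\pi^{s}\sqrt{\I z}}{\Gamma(s)\zeta(2s)}\sum_{N=1}^{\infty}N^{s-1/2}\sigma_{1-2s}(N)K_{s-1/2}(2\pi N\I z)\cos(2\pi N\R z),
\end{align*}
set $s=m+1$, and use $K_{m+1/2}(x)=\sqrt{\pi/(2x)}\,e^{-x}\sum_{k=0}^{m}\frac{(m+k)!}{k!(m-k)!(2x)^{k}}$. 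A short induction on $m$ identifies the resulting polynomial with the reverse Bessel polynomial $\theta_{m}$ and shows that the operator $D:=\frac{\partial}{\partial\I z}\cdot\frac{1}{\I z}$ satisfies $D^{m}e^{-2\pi N\I z}=(-1)^{m}(\I z)^{-2m}\theta_{m}(2\pi N\I z)\,e^{-2\pi N\I z}$. Each Fourier coefficient then acquires the shape $\propto(-1)^{m}(\I z)^{m}D^{m}e^{-2\pi N\I z}$, $D^{m}$ may be pulled outside the sum over $N$, and the divisor resummation
\[\R\sum_{n\geq 1}\frac{1}{n^{2m+1}(e^{2\pi nz/i}-1)}=\sum_{N\geq 1}\sigma_{-2m-1}(N)\,e^{-2\pi N\I z}\cos(2\pi N\R z)\]
produces~\eqref{eq:Epstein_SL2Z_Ramanujan_series_repn}; matching the overall constant $\frac{\pi}{(-2)^{m-1}\Gamma(m+1)\zeta(2m+2)}$ is routine bookkeeping.

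For~(b), specializing~\eqref{eq:Epstein_HeckeN_expn} to $N=4$ gives $E^{\varGamma_0(4)}(w,s)=[E^{\varGamma_0(1)}(4w,s)-2^{-s}E^{\varGamma_0(1)}(2w,s)]/(4^{s}-1)$; setting $w=-1/(4z)$ and applying $E^{\varGamma_0(1)}(-1/\zeta,s)=E^{\varGamma_0(1)}(\zeta,s)$ at $\zeta\in\{z,2z\}$ yields
\[E^{\varGamma_0(4)}\!\left(-\tfrac{1}{4z},m+1\right)=\frac{E^{\varGamma_0(1)}(z,m+1)-2^{-m-1}E^{\varGamma_0(1)}(2z,m+1)}{2^{2m+2}-1}.\]
Invoking part~(a) for both summands, the $(\I z)^{m+1}$ terms cancel, the $(\I z)^{-m}$ contributions collapse to $\frac{2^{2m+1}-1}{2^{2m+2}-1}\cdot\frac{\sqrt{\pi}\,\Gamma(m+1/2)}{2\Gamma(m+1)}\cdot\frac{\zeta(2m+1)}{\zeta(2m+2)}(4\I z)^{-m}$, and the two residual Lambert sums combine through the elementary identity $S(z)-2^{-(2m+1)}S(2z)=\R\sum_{n\text{ odd}}n^{-(2m+1)}/(e^{2\pi nz/i}-1)$ into a restriction to odd~$n$. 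The half-period identity $e^{(2n+1)\pi(2z+1)/i}=-e^{(2n+1)\cdot 2\pi z/i}$ (from $e^{-i(2n+1)\pi}=-1$) then converts $1/(e^{\cdot}-1)$ over odd~$n$ into $-1/(e^{\cdot}+1)$ over $2n+1$, giving~\eqref{eq:Epstein_Hecke4_Ramanujan_series_repn}.

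The main technical obstacle is the compatibility of the differential operator with the rescaling $z\mapsto 2z$. Writing $g(u):=S(2\R z+iu)$ so that $S(2z)=g(2y)$, an inductive chain-rule argument yields $D^{m}g(2y)=4^{m}(D_{u}^{m}g(u))|_{u=2y}$, from which $(2\I z)^{m}(\partial_{2\I z}\cdot\tfrac{1}{2\I z})^{m}S(2z)=2^{-m}(\I z)^{m}D^{m}S(2z)$. This rescaling is exactly what permits the two $D^{m}$-contributions from the copies of part~(a) at $z$ and at $2z$ to be absorbed under a single $D^{m}$ acting on the odd-index Lambert series, and it is what produces the precise constant $1/[(-2)^{m-1}(2^{2m+2}-1)]$ displayed in~\eqref{eq:Epstein_Hecke4_Ramanujan_series_repn}.
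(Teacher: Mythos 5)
Your proposal is correct, and it takes a genuinely different route from the paper. For part (a) you derive Eq.~\ref{eq:Epstein_SL2Z_Ramanujan_series_repn} by specializing the classical Fourier expansion (Eq.~\ref{eq:Epstein_SL2Z_Fourier_Hejhal}) to $s=m+1$, using the elementary closed form of $K_{m+1/2}$ and the identity $D^{m}e^{-2\pi Ny}=(-1)^{m}y^{-2m}\theta_{m}(2\pi Ny)e^{-2\pi Ny}$ for $D=\frac{\partial}{\partial y}\frac{1}{y}$; I have checked your constants (the factor $-\pi/[(-2)^{m-1}\Gamma(m+1)\zeta(2m+2)]$ comes out right) and the divisor resummation is exactly Eq.~\ref{eq:divisor_sum_rearrangement}. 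This is precisely the equivalence the paper records in the remark following its proof of (a) but deliberately avoids: the paper instead characterizes $E^{\varGamma_0(1)}(\cdot,s)$ by a uniqueness lemma (symmetry, eigenvalue equation, growth) and verifies the inversion symmetry of the right-hand side via Ramanujan's reflection formula, a Mellin-contour argument, and an induction on powers of the Laplacian plus unique continuation. For part (b) your route is notably more economical than the paper's: you obtain Eq.~\ref{eq:Epstein_Hecke4_Ramanujan_series_repn} as a purely algebraic corollary of (a) via the $N=4$ case of Eq.~\ref{eq:Epstein_HeckeN_expn} (equivalently Eq.~\ref{eq:Epstein_Hecke4_expn}) and $SL(2,\mathbb Z)$-invariance, with the operator rescaling $D_{y}=4D_{2y}$ and the half-period sign flip $e^{(2n+1)\pi(2z+1)/i}=-e^{2(2n+1)\pi z/i}$ doing the bookkeeping correctly — the $(\I z)^{m+1}$ terms cancel and the constants $\frac{2^{2m+1}-1}{2^{2m+2}-1}$ and $1/[(-2)^{m-1}(2^{2m+2}-1)]$ all match. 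The paper instead reproves (b) from scratch with a second uniqueness lemma adapted to $\overline{\varGamma}_{0}(2)$ and a second reflection formula (Eq.~\ref{eq:Ramanujan_reflection}), which your argument bypasses entirely. The trade-off: your proof leans on the known Fourier expansion as external input, whereas the paper's symmetry-only method is self-contained and is advertised as extending to non-integer $s$; in exchange, your treatment of (b) exposes it as a formal consequence of (a) rather than an independent result.
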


 We devote \S\ref{sec:period_int}    to integral representations of $ E^{\varGamma_0(4)}(z,s)$ where $ s\in\{2,3,4,5,7\}$. In the results summarized by the following theorem,  the complete elliptic integral  $ \mathbf K$ is defined by  \begin{align} \mathbf K(\sqrt{t}):=\int_0^{\pi/2}\frac{\D\theta}{\sqrt{1-t\sin^2\theta}},\quad t\in\mathbb C\smallsetminus[1,+\infty);\end{align} the modular lambda function \begin{align}\lambda(z):=\frac{16[\eta(z/2)]^8[\eta(2z)]^{16}}{[\eta(z)]^{24}},\quad z\in\mathfrak H\end{align} is defined via the Dedekind eta function \begin{align} \eta(z):=e^{\pi iz/12}\prod_{n=1}^\infty(1-e^{2\pi inz}),\quad z\in\mathfrak H.\end{align}\begin{theorem}[Some Special  Integral Representations for Epstein Zeta Functions]\label{thm:int_repn_Epstein_zeta} If $z\in\mathfrak H$ satisfies the following inequalities:\begin{align}
|\R(2z+1)|<1,\quad\left\vert 2z+\frac{1}{2} \right\vert>\frac{1}{2},\quad \left\vert 2z+\frac{3}{2} \right\vert>\frac{1}{2},\label{eq:lambda_range_Epstein_Hecke4}
\end{align} then we have the following integral representations of $ E^{\varGamma_0(4)}(-1/(4z),s),s\in\{2,3,4,5,7\}$:{\allowdisplaybreaks\begin{align}{}&
E^{\varGamma_0(4)}\left( -\frac{1}{4z},2 \right)-\frac{21 \zeta (3)}{8 \pi ^3\I z}\notag\\={}&\R\int_0^{\lambda(2z+1)}\frac{3[\mathbf K(\sqrt{t})]}{4\pi^{3}\I z}^2\left[ \frac{i\mathbf K(\sqrt{1-t})}{\mathbf K(\sqrt{t})} -2z-1\right]\left[ \frac{i\mathbf K(\sqrt{1-t})}{\mathbf K(\sqrt{t})} -2\overline{z}-1\right]\D t,\label{eq:K_int_repn2}\\&E^{\varGamma_0(4)}\left( -\frac{1}{4z},3 \right)-\frac{1395 \zeta (5)}{256 \pi ^5(\I z)^2}\notag\\={}&\R\int_0^{\lambda(2z+1)}\frac{15(2t-1)[\mathbf K(\sqrt{t})]^4}{32\pi^{5}(\I z)^2}\left[ \frac{i\mathbf K(\sqrt{1-t})}{\mathbf K(\sqrt{t})} -2z-1\right]^2\left[ \frac{i\mathbf K(\sqrt{1-t})}{\mathbf K(\sqrt{t})} -2\overline{z}-1\right]^{2}\D t,\label{eq:K_int_repn3}\\{}&E^{\varGamma_0(4)}\left( -\frac{1}{4z},4 \right)-\frac{200025 \zeta (7)}{17408 \pi ^7(\I z)^3}\notag\\={}&\R\int_0^{\lambda(2z+1)}\frac{70[2-17t(1-t)][\mathbf K(\sqrt{t})]^6}{1088\pi^{7}(\I z)^3}\left[ \frac{i\mathbf K(\sqrt{1-t})}{\mathbf K(\sqrt{t})} -2z-1\right]^3\left[ \frac{i\mathbf K(\sqrt{1-t})}{\mathbf K(\sqrt{t})} -2\overline{z}-1\right]^{3}\D t,\label{eq:K_int_repn4}\\{}&E^{\varGamma_0(4)}\left( -\frac{1}{4z},5 \right)-\frac{50703975 \zeta (9)}{2031616 \pi ^9(\I z)^4}\notag\\={}&\R\int_0^{\lambda(2z+1)}\frac{315(2t-1)[1-31t(1-t)][\mathbf K(\sqrt{t})]^8}{15872\pi^{9}(\I z)^4}\left[ \frac{i\mathbf K(\sqrt{1-t})}{\mathbf K(\sqrt{t})} -2z-1\right]^4\left[ \frac{i\mathbf K(\sqrt{1-t})}{\mathbf K(\sqrt{t})} -2\overline{z}-1\right]^{4}\D t,\label{eq:K_int_repn5}\\&E^{\varGamma_0(4)}\left( -\frac{1}{4z},7 \right)-\frac{11506129710075 \zeta (13)}{91620376576 \pi ^{13}(\I z)^6}\notag\\={}&\R\int_0^{\lambda(2z+1)}\frac{3003(2t-1)[1-512t(1-t)+5461t^{2}(1-t)^{2}][\mathbf K(\sqrt{t})]^{12}}{22368256\pi^{13}(\I z)^6}\times\notag\\&\times\left[ \frac{i\mathbf K(\sqrt{1-t})}{\mathbf K(\sqrt{t})} -2z-1\right]^6\left[ \frac{i\mathbf K(\sqrt{1-t})}{\mathbf K(\sqrt{t})} -2\overline{z}-1\right]^{6}\D t,\label{eq:K_int_repn7}
\end{align}}where all the integration paths are  straight-line segments joining the end points.  \end{theorem}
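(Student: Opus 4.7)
The plan is to convert the Ramanujan series of Theorem~\ref{thm:Ramanujan_series}\ref{itm:Epstein_Hecke4}, specialized to $m\in\{1,2,3,4,6\}$, into non-holomorphic Eichler integrals on the principal congruence subgroup $\varGamma(2)$. The shape of each right-hand side is strongly suggestive of this route: after the substitution $\tau=2z+1$ and $t=\lambda(\tau)$, the classical identity $\tau=i\mathbf K(\sqrt{1-t})/\mathbf K(\sqrt{t})$ turns the bracketed factors into $(\tau-z)^m(\tau-\bar z)^m$, while the remaining product $[\mathbf K(\sqrt{t})]^{2m}P(t)\,dt$ should transcribe, via Jacobi's identity $d\lambda/d\tau=\frac{4i}{\pi}\lambda(1-\lambda)[\mathbf K(\sqrt{\lambda})]^2$, into $f_{2m+2}(\tau)\,d\tau$ for a holomorphic modular form $f_{2m+2}$ of weight $2m+2$ on $\varGamma(2)$.

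I would first expand the Fermi-type denominator as
\[\frac{1}{e^{(2n+1)\pi(2z+1)/i}+1}=\sum_{k=1}^\infty(-1)^{k-1}e^{(2n+1)\pi ik(2z+1)},\]
and interchange the order of summation, recognizing $\sum_k(-1)^{k-1}\sum_n(2n+1)^{-(2m+1)}e^{(2n+1)\pi ik\tau}$ as the $(2m+1)$-fold iterated antiderivative, in $\tau$, of a holomorphic weight-$(2m+2)$ modular form on $\varGamma(2)$. Carrying out the non-holomorphic derivation $\bigl(\frac{\partial}{\partial\I z}\frac{1}{\I z}\bigr)^m$ and extracting the real part, I would then invoke a standard Eichler integration-by-parts to collapse the iterated antiderivative into a single contour integral with kernel $(\tau-z)^m(\tau-\bar z)^m f_{2m+2}(\tau)\,d\tau$. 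The boundary contribution at $\tau\to i\infty$ should produce exactly the constant term $\zeta(2m+1)/(\I z)^m$ that is subtracted on the left-hand side of each identity, and the contour is finally mapped under $\lambda$ to a path from $0$ to $\lambda(2z+1)$ which is homotopic to the straight-line segment precisely when \eqref{eq:lambda_range_Epstein_Hecke4} holds.

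The next ingredient is to identify $f_{2m+2}$ as an explicit polynomial in $[\mathbf K(\sqrt{\lambda})]^2$ and $\lambda$. For the weights $2m+2\in\{4,6,8,10,14\}$ the relevant space of holomorphic modular forms on $\varGamma(2)$ has small dimension, and one expects the appropriate combination of Eisenstein series and cusp forms to recover, up to scalar, the polynomials $1$, $(2t-1)$, $2-17t(1-t)$, $(2t-1)[1-31t(1-t)]$, and $(2t-1)[1-512t(1-t)+5461t^2(1-t)^2]$. The symmetry or antisymmetry of these polynomials under $t\leftrightarrow 1-t$ reflects the Fricke involution structure on $\varGamma(2)$, which corresponds, under the substitution $-1/(4z)$, to the Hecke-group involution acting on $E^{\varGamma_0(4)}(\cdot,s)$.

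The principal obstacle, as I see it, is not the conceptual framework (which is standard Eichler integral theory applied to non-holomorphic Eisenstein series) but rather the explicit bookkeeping of numerical constants. The prefactors $\tfrac{3}{4\pi^3}$, $\tfrac{15}{32\pi^5}$, $\tfrac{70}{1088\pi^7}$, $\tfrac{315}{15872\pi^9}$, $\tfrac{3003}{22368256\pi^{13}}$ together with the accompanying $\zeta$-values must emerge from the combination of (i) the factorials generated by the $(2m+1)$-fold iterated antiderivation, (ii) the leading coefficients of the Eisenstein and cusp polynomials at the relevant weights, and (iii) the Ramanujan-series constants in Theorem~\ref{thm:Ramanujan_series}\ref{itm:Epstein_Hecke4}. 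The restriction to $s\in\{2,3,4,5,7\}$ is presumably dictated by the desire to keep $P(t)$ short and the formula visually clean; for other weights an analogous integral representation should still hold, but with polynomials of higher degree in $t(1-t)$.
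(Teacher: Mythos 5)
Your outline follows essentially the same route as the paper: rewrite the Lambert-type sums as Eichler integrals $\int_{z}^{i\infty}f_{2m+2}(\zeta)(\zeta-z)^{2m}\,\D\zeta$ of holomorphic Eisenstein series, pull back under $t=\lambda(\zeta)$ using $\zeta=i\mathbf K(\sqrt{1-t})/\mathbf K(\sqrt t)$ and $\D\zeta=-\pi\,\D t/\bigl(4t(1-t)[\mathbf K(\sqrt t)]^2\bigr)$, and feed the result into Theorem~\ref{thm:Ramanujan_series}\ref{itm:Epstein_Hecke4}. The only cosmetic difference is that the paper avoids working abstractly on $\varGamma(2)$ by handling the alternating denominator through $\tfrac{1}{e^u+1}=\tfrac{1}{e^u-1}-\tfrac{2}{e^{2u}-1}$ and expressing everything via $E_4(\zeta)$, $E_4(2\zeta)$ and Landen transformations; the explicit polynomial and constant bookkeeping you defer is exactly the content of the paper's computation.
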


The formulae  in Theorem \ref{thm:int_repn_Epstein_zeta} for non-holomorphic Eisenstein series provide an alternative perspective to some  integral identities recently reported by Wan and Zucker \cite[][\S3]{WanZucker2014} for certain  $ L$-functions $ L(s,\chi)$  with unrestricted values of $s$, in the setting of holomorphic Eisenstein series. Our derivations in this note are based solely on symmetry considerations, without invoking the use of  Jacobi theta functions as in the work of Wan and Zucker.
\section{Ramanujan Series for $ E^{\varGamma_0(1)}(z,s)$\label{sec:Epstein_SL2Z}}We recall that the Epstein zeta function $ E^{\varGamma_0(1)}(z,s)$ has the following asymptotic expansion \cite[][p.~240, Eq.~2.17]{GrossZagierI}\begin{align}
E^{\varGamma_0(1)}(z,s)=(\I z)^s+\frac{\sqrt{\pi}\Gamma(s-\frac{1}{2})}{\Gamma(s)}\frac{\zeta(2s-1)}{\zeta(2s)}\frac{1}{(\I z)^{s-1}}+O(e^{-(2\pi-0^+)\I z}).\label{eq:Epstein_SL2Z_asympt}
\end{align}In the next lemma, we show that the aforementioned asymptotic behavior, together with a couple of analytic qualifications, uniquely characterize the Epstein zeta function. As in \cite[][Lemma 2.0.1]{AGF_PartI}, we denote the (negative semi-definite) Laplace operator on $ \mathfrak H$ as\begin{align}
\Delta_z^{\mathfrak H}=(\I z)^2\left[ \frac{\partial^2}{\partial(\R z)^2}+ \frac{\partial^2}{\partial(\I z)^2}\right].
\end{align}\begin{lemma}[Uniqueness of Epstein Zeta Function]\label{lm:EZF}Suppose that $\R s>1$. If a smooth function $ F:\mathfrak H\longrightarrow\mathbb R$ has the following properties: \begin{enumerate}[leftmargin=*,  label=\emph{(EZF\arabic*)},ref=(EZF\arabic*),
widest=a, align=left]\item \label{itm:EZF1}\emph{(\textbf{Symmetry})} $ F(z)=F(z+1)=F(-1/z),\forall z\in\mathfrak H$,\item \label{itm:EZF2} \emph{(\textbf{Differential Equation})} $ \Delta_z^{\mathfrak H}F(z)=s(s-1)F(z),\forall z\in\mathfrak H$,\item \label{itm:EZF3}\emph{(\textbf{Asymptotic Behavior})} $ F(iy)=y^s+o(y^s),\mathbb R\ni y\to+\infty$,\end{enumerate} then $ F(z)=E^{\varGamma_0(1)}(z,s)$.\end{lemma}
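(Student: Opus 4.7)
My plan is to study $G(z) := F(z) - E^{\varGamma_0(1)}(z,s)$, aiming for $G \equiv 0$. By \ref{itm:EZF1}, \ref{itm:EZF2} and the asymptotic \eqref{eq:Epstein_SL2Z_asympt}, $G$ is smooth, $\varGamma_0(1)$-invariant, satisfies $\Delta_z^{\mathfrak H} G = s(s-1) G$, and obeys $G(iy) = o(y^s)$ as $y \to +\infty$.

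First I would exploit the $T$-invariance $G(z+1) = G(z)$ to write the Fourier expansion
\begin{align*}
G(x+iy) = a_0(y) + \sum_{n \neq 0} a_n(y)\, e^{2\pi i n x},
\end{align*}
and separate variables in the eigenvalue equation to obtain $a_0(y) = A y^s + B y^{1-s}$ and $a_n(y) = c_n \sqrt{y}\, K_{s-\frac12}(2\pi|n|y) + d_n \sqrt{y}\, I_{s-\frac12}(2\pi|n|y)$ for $n \neq 0$. The remaining task is to show that every one of these constants vanishes.

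The hard part is the moderate-growth step: showing that $d_n = 0$ for every $n \neq 0$. Because $\sqrt{y}\,I_{s-\frac12}(2\pi|n|y) \sim e^{2\pi|n|y}/\sqrt{4\pi|n|}$ grows exponentially in $y$, any nonzero $d_n$ would force $G$ to grow exponentially along horocycles. But smoothness of $G$ together with full $\varGamma_0(1)$-invariance makes $G$ a continuous function on $\varGamma_0(1)\backslash\mathfrak H$, bounded on the compact part of the fundamental domain $\{|z|\ge 1,\ |\R z|\le \tfrac12\}$, with cuspidal behavior constrained by \ref{itm:EZF3} and by the $S$-symmetry $G(-1/z) = G(z)$. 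Promoting the axial control of \ref{itm:EZF3} to a uniform polynomial bound $G(x+iy) = O(y^N)$ on each horocycle $\{x+iy : 0 \le x < 1\}$ then forces $d_n = 0$ via the Fourier-inversion bound $|a_n(y)| \le \sup_x |G(x+iy)|$. Once this is done, the non-constant modes decay exponentially at the cusp, so \ref{itm:EZF3} collapses to $A y^s + B y^{1-s} = o(y^s)$ and yields $A = 0$.

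It remains to show $B = 0$ and $c_n = 0$ for every $n \neq 0$. The surviving function has Fourier constant term $B y^{1-s}$, only exponentially decaying higher modes, and is still $\varGamma_0(1)$-invariant with eigenvalue $s(s-1) = (1-s)\bigl((1-s)-1\bigr)$. By the classification of $\varGamma_0(1)$-invariant Maass eigenfunctions of moderate growth with this eigenvalue, $G$ must (for generic $s$, which includes the integer values targeted in Theorem~\ref{thm:Ramanujan_series}) be a multiple of the analytic continuation of $E^{\varGamma_0(1)}(z, 1-s)$, whose Fourier constant term contains a nontrivial $y^s$ piece with coefficient $\phi(1-s) = \sqrt{\pi}\,\Gamma(\tfrac12 - s)\zeta(1-2s)/[\Gamma(1-s)\zeta(2-2s)] \neq 0$. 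Compatibility with the already-established $A = 0$ then forces the multiple to vanish, so $B = 0$ and every $c_n = 0$, completing the proof that $G \equiv 0$.
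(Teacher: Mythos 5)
Your overall architecture matches the paper's: expand $G=F-E^{\varGamma_0(1)}(\cdot,s)$ in a Fourier series in $\R z$ using \ref{itm:EZF1}--\ref{itm:EZF2}, solve the Bessel ODE mode by mode, and then invoke a rigidity statement to kill all coefficients. The one step you yourself flag as ``the hard part'' --- discarding the exponentially growing $I$-Bessel solutions --- is, however, asserted rather than proved. You propose to ``promote the axial control of \ref{itm:EZF3} to a uniform polynomial bound $G(x+iy)=O(y^N)$ on each horocycle,'' but none of the ingredients you list delivers this: for large $y$ the full horocycle $\{x+iy:0\le x<1\}$ lies inside the cuspidal end of the fundamental domain, where compactness gives nothing; \ref{itm:EZF3} constrains $G$ only on the line $\R z=0$; and the inversion $z\mapsto-1/z$ maps the cuspidal end to a small neighbourhood of $0$, so it yields no new information about $\sup_x|G(x+iy)|$ as $y\to+\infty$. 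Note in particular that real-valuedness plus the axial condition only force $\R d_n=0$: a combination such as $\I(d_n)\,\sqrt{y}\,I_{s-\frac12}(2\pi ny)\sin(2\pi n\R z)$ vanishes identically on the imaginary axis while growing exponentially off it, so \ref{itm:EZF3} genuinely does not see all the growing modes and some further argument is required. To be fair, the paper's own proof simply writes the expansion (Eq.~\ref{eq:f_z_Fourier}) without the $I$-Bessel terms and does not address this point either; you are more candid about where the difficulty sits, but as written you do not close it.

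Your endgame also differs from the paper's and is heavier than necessary. Once $A=0$ and the growing modes are gone, $G$ has constant term $By^{1-s}$ with $\R(1-s)<0$ plus exponentially decaying modes, hence is bounded on the finite-volume quotient and lies in $L^2(\varGamma_0(1)\backslash\mathfrak H)$; the paper then finishes in one line, since the $L^2$-spectrum of the negative semi-definite Laplacian is real and non-positive while $s(s-1)$ with $\R s>1$ never is. Your route through the classification of moderate-growth eigenfunctions and the non-vanishing of $\phi(1-s)$ reaches the same conclusion, but that classification is itself usually proved by exactly this $L^2$ argument, and at the integer points $s=m+1$ actually used in Theorem~\ref{thm:Ramanujan_series} your expression for $\phi(1-s)$ is an indeterminate form ($\Gamma(1-s)$ has a pole and $\zeta(2-2s)$ a trivial zero in the denominator), so the claim $\phi(1-s)\neq0$ would need a separate limit computation. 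The direct spectral argument avoids both issues.
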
\begin{proof}With the periodicity $ F(z)=F(z+1)$ in \ref{itm:EZF1} and the differential equation in \ref{itm:EZF2}, we have a Fourier expansion:\begin{align}
F(z)=A(\I z)^s+\frac{B}{(\I z)^{s-1}}+\sum_{n\in \mathbb Z\smallsetminus\{0\}}\sqrt{\I z}K_{s-\frac{1}{2}}(2\pi|n|\I z)e^{2\pi in\R z}c_n\label{eq:f_z_Fourier}
\end{align}for some constants $ A$, $B$ and $ \{c_n|n\in\mathbb Z,n\neq0\}$, where \cite[][\S6.15]{Watson1995Bessel}\begin{align} K_\nu(y)=\frac{\sqrt{\pi}(y/2)^\nu}{\Gamma(\nu+\frac{1}{2})}\int_0^\infty e^{-y\cosh u}(\sinh u)^{2\nu}\D u\end{align} is the  $K$-Bessel function. Once a smooth and real-valued function $ F(z),z\in\mathfrak H$ satisfies all the conditions in \ref{itm:EZF1}--\ref{itm:EZF3}, we see from Eq.~\ref{eq:f_z_Fourier} that $ F(z)-E^{\varGamma_0(1)}(z,s)$ defines a bounded function on the orbit space $ \varGamma_0(1)\backslash\mathfrak H$, and it  is annihilated by the operator  $ \Delta_z^{\mathfrak H}-s(s-1)$. Now that $ \R s>1$, the number $ s(s-1)$ cannot be an eigenvalue of the Laplacian $ \Delta_z^{\mathfrak H}$, so we must have a vanishing identity $ F(z)-E^{\varGamma_0(1)}(z,s)\equiv 0$.   \end{proof}

The formula we proposed in  Eq.~\ref{eq:Epstein_SL2Z_Ramanujan_series_repn}
is closely related to a well-known entry \cite[][p.~276]{RN2} from Ramanujan's second notebook, which we recapitulate in the lemma below.\begin{lemma}[Ramanujan's Reflection Formula]For  $ z\in\mathfrak H,m\in\mathbb Z_{>0}$, we have the following identity:\begin{align}&
\frac{1}{(z/i)^m}\sum_{n=1}^\infty\frac{1}{n^{2m+1}}\frac{1}{e^{2n\pi\frac{z}{i} }-1}-(-z/i)^{m}\sum_{n=1}^\infty\frac{1}{n^{2m+1}}\frac{1}{e^{2n\pi\frac{i}{z} }-1}\notag\\={}&\frac{\zeta(2m+2)}{2\pi}\left[\frac{1}{(z/i)^{m+1}}+(-z/i)^{m+1}\right]-\frac{\zeta(2m+1)}{2}\left[\frac{1}{(z/i)^{m}}-(-z/i)^{m}\right]+\frac{1}{\pi}\sum_{k=0}^{m-1}\frac{\zeta(2k+2)\zeta(2m-2k)}{(-1)^{k}(z/i)^{m-2k-1}}.
\label{eq:Ramanujan_reflection_notebook}\end{align}\end{lemma}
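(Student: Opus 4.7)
The plan is to establish this classical entry of Ramanujan by a contour integration argument. Setting $\tau=z/i$ so that $\R\tau>0$, I would consider the meromorphic function
\begin{equation*}
f(w)=\frac{\cot(\pi w)\coth(\pi\tau w)}{w^{2m+1}},
\end{equation*}
whose poles lie at the nonzero real integers $w=n$, at the ``imaginary-direction'' lattice points $w=ik/\tau$ with $k\in\mathbb Z\smallsetminus\{0\}$, and at $w=0$, where $f$ has a pole of order $2m+3$.

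First I would treat the case of positive real $\tau$ and extend to the full right half-plane by analytic continuation in $\tau$. For each large $N\in\mathbb Z_{>0}$, I would integrate $f$ along the boundary of the parallelogram with vertices $\pm(N+\tfrac{1}{2})\pm i(N+\tfrac{1}{2})/\tau$. On this contour, $|\cot(\pi w)\coth(\pi\tau w)|$ stays uniformly bounded (being bounded away from all the poles), so the $|w|^{-2m-1}$ decay with $m\geq 1$ forces the contour integral to vanish as $N\to\infty$. By the residue theorem, the sum of all residues of $f$ must equal zero.

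Next I would compute the residues in three groups. At $w=n$ the residue is $\coth(\pi n\tau)/(\pi n^{2m+1})$, and summing over $n\in\mathbb Z\smallsetminus\{0\}$ via $\coth(x)=1+2/(e^{2x}-1)$ yields $2\zeta(2m+1)/\pi+4\pi^{-1}\sum_{n=1}^\infty n^{-2m-1}/(e^{2n\pi\tau}-1)$. At $w=ik/\tau$ the identity $\cot(i\pi k/\tau)=-i\coth(\pi k/\tau)$ converts the analogous sum into $2(-1)^{m+1}\tau^{2m}\zeta(2m+1)/\pi+4(-1)^{m+1}\tau^{2m}\pi^{-1}\sum_{k=1}^\infty k^{-2m-1}/(e^{2k\pi/\tau}-1)$. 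The residue at $w=0$ is the coefficient of $w^{2m}$ in the product of the Laurent series $\cot(\pi w)=\sum_{k\geq 0}(-1)^k 2^{2k}B_{2k}(\pi w)^{2k-1}/(2k)!$ and $\coth(\pi\tau w)=\sum_{l\geq 0}2^{2l}B_{2l}(\pi\tau w)^{2l-1}/(2l)!$, which reduces to a finite sum over $k+l=m+1$ of Bernoulli pairs.

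Setting the grand total of residues to zero, then multiplying by $\pi/4$ and dividing by $\tau^m$, produces the shape of the claimed identity after substituting $\tau=z/i$ and $1/\tau=i/z$. The main obstacle is the bookkeeping in the residue at the origin: converting each Bernoulli pair $B_{2k}B_{2m+2-2k}$ into a zeta-value product via Euler's formula $\zeta(2k)=(-1)^{k+1}(2\pi)^{2k}B_{2k}/(2(2k)!)$, the interior indices $1\leq l\leq m$ should assemble into the double-zeta polynomial $\pi^{-1}\sum_{k=0}^{m-1}(-1)^k\zeta(2k+2)\zeta(2m-2k)\tau^{2k+1-m}$, whereas the boundary indices $l=0$ and $l=m+1$ must combine to the edge term $\zeta(2m+2)(2\pi)^{-1}[\tau^{-m-1}+(-1)^{m+1}\tau^{m+1}]$, which is exactly $\zeta(2m+2)[(z/i)^{-m-1}+(-z/i)^{m+1}]/(2\pi)$ once $-(-\tau)^m=(-1)^{m+1}\tau^m$ is used to rewrite the left-hand side as $S_1/(z/i)^m-(-z/i)^m S_2$.
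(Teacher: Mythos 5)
Your argument is correct, and I checked the details: the residues at $w=n$ and $w=ik/\tau$ are as you state, the boundedness of $\cot(\pi w)\coth(\pi\tau w)$ on the contour $\R w=\pm(N+\tfrac12)$, $\I w=\pm(N+\tfrac12)/\tau$ does force the integral to be $O(N^{-2m})\to0$ for $m\geq1$, and the Bernoulli bookkeeping at the origin works out exactly as you predict --- the pairs with $k+l=m+1$ and $1\leq l\leq m$ give $\pi^{-1}\sum_{k=0}^{m-1}(-1)^{k}\zeta(2k+2)\zeta(2m-2k)\tau^{2k+1-m}$ after Euler's formula, and the two extreme pairs give the $\zeta(2m+2)$ edge term with the correct signs. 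However, your route is genuinely different from the paper's. The paper follows Grosswald's Mellin-transform method: it writes $(e^{2n\pi y}-1)^{-1}$ as a Mellin--Barnes integral, forms $R_m(s)=\Gamma(s)\zeta(s)\zeta(s+2m+1)(2\pi)^{-s}$, exploits the reflection formula $R_m(s-2m)=(-1)^mR_m(-s)$ coming from the functional equation of $\zeta$, and collects the residues at $s=1,0,-2k-1$ during the contour shift; the terms $\zeta(2m+2)$, $\zeta(2m+1)$ and the double-zeta polynomial arise as those Mellin residues rather than from a Laurent expansion at the origin. Your kernel-function approach is more elementary and self-contained (no functional equation of $\zeta$ is needed beyond Euler's evaluation of $\zeta(2k)$), at the cost of a somewhat heavier residue computation at $w=0$. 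The Mellin approach has the advantage that it transplants directly to the twisted analogue the paper needs next (the reflection formula of Ramanujan type for $E^{\varGamma_0(4)}$, where the kernel $r_m(s)$ carries the extra factors $(1-2^{1-s})(1-2^{-1-2m-s})$), whereas your contour method would require choosing a different trigonometric kernel there. Both proofs conclude with the same analytic continuation from $z/i>0$ to the full upper half-plane.
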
\begin{proof}We follow the standard procedures in Grosswald's lemma \cite[][\S4]{GunMurtyRath2011}, starting from a Mellin inversion formula \cite[][p.~312, \S6.3, Eq.~7]{ET1}:\begin{align}
\frac{1}{e^{2n\pi y}-1}=\frac{1}{2\pi i}\int_{\frac{3}{2}-i\infty}^{\frac{3}{2}+i\infty}\frac{\Gamma(s)\zeta(s)\D s}{(2n\pi y)^{s}},\quad y>0.
\end{align}Clearly, we have \begin{align}\sum_{n=0}^
\infty\frac{1}{n^{2m+1}}\frac{1}{e^{2n\pi y}-1}=\frac{1}{2\pi i}\int_{\frac{3}{2}-i\infty}^{\frac{3}{2}+i\infty}\frac{R_m(s)\D s}{ y^{s}},\quad y>0,\label{eq:Rm_s}
\end{align} where the expression\begin{align}
R_m(s)=\frac{\Gamma(s)\zeta(s)\zeta(s+2m+1)}{(2\pi)^{s}}=\frac{\zeta(1-s)\zeta(s+2m+1)}{2\sin\frac{\pi(1-s)}{2}}
\end{align}satisfies a functional equation  $ R_{m}(s-2m)=R_m(-s)\cos\frac{\pi s}{2}\sec\frac{2m\pi-\pi s}{2}$. This reduces to a reflection formula $ R_m(s-2m)=(-1)^m R_{m}(-s)$ for $ m\in\mathbb Z_{>1}$.

Picking up  residues of $ R_m(s)$ at $ s=1,0$ and $ s=-2k-1$ for $ k\in\mathbb Z\cap[0,m-1]$, we have\begin{align}
\frac{1}{2\pi i}\int_{\frac{3}{2}-i\infty}^{\frac{3}{2}+i\infty}\frac{R_{m}(s)\D s}{ y^{s+m}}={}&\frac{1}{2\pi i}\int_{\frac{1}{2}-i\infty}^{\frac{1}{2}+i\infty}\frac{R_{m}(s-2m)\D s}{ y^{s-m}}+\frac{\zeta(2m+2)}{2\pi y^{m+1}}-\frac{\zeta(2m+1)}{2y^{m}}\notag\\{}&+\frac{1}{\pi}\sum_{k=0}^{m-1}\frac{\zeta(2k+2)\zeta(2m-2k)}{(-1)^{k}y^{m-2k-1}},\quad y>0,m\in\mathbb Z_{>1}.
\end{align}   However, by the reflection formula  $ R_m(s-2m)=(-1)^m R_{m}(-s)$ for $ m\in\mathbb Z_{>1}$, we obtain\begin{align}
\frac{1}{2\pi i}\int_{\frac{3}{2}-i\infty}^{\frac{3}{2}+i\infty}\frac{R_{m}(s-2m)\D s}{ y^{s-m}}={}&\frac{(-y)^m}{2\pi i}\int_{\frac{1}{2}-i\infty}^{\frac{1}{2}+i\infty}\frac{R_{m}(-s)\D s}{ y^{s}}=\frac{(-y)^m}{2\pi i}\int_{-\frac{1}{2}-i\infty}^{-\frac{1}{2}+i\infty}\frac{R_{m}(s)\D s}{ (1/y)^{s}}\notag\\={}&\frac{(-y)^m}{2\pi i}\int_{\frac{3}{2}-i\infty}^{\frac{3}{2}+i\infty}\frac{R_{m}(s)\D s}{ (1/y)^{s}}+(-y)^{m}\frac{\zeta(2m+1)}{2}-(-y)^{m}\frac{\zeta(2m+2)}{2\pi/y }.
\end{align}  This proves Eq.~\ref{eq:Ramanujan_reflection_notebook} for $ z/i>0$, and the rest follows from analytic continuation.
    \end{proof}With the preparations above, we can proceed with a proof of Eq.~\ref{eq:Epstein_SL2Z_Ramanujan_series_repn}.

\begin{proof}[Proof of Theorem~\ref{thm:Ramanujan_series}\ref{itm:Epstein_SL2Z}]We denote the real-valued function on the  right-hand side of  Eq.~\ref{eq:Epstein_SL2Z_Ramanujan_series_repn} by $ F(z)$.  The translational invariance $ F(z)=F(z+1)$ in \ref{itm:EZF1} is easy to check, while the relation $ F(z)=F(-1/z)$ in \ref{itm:EZF1} requires more efforts,  occupying three paragraphs to follow.

Firstly, we verify that $ F(iy)=F(i/y)$ for $y>0 $. Towards this end, we enlist the help of Eq.~\ref{eq:Rm_s} to compute\begin{align}-\frac{\pi}{(-2)^{m-1}\Gamma(m+1)\zeta(2m+2) }y^{m}\left( \frac{\partial}{\partial y}\frac{1}{y} \right)^m
\sum_{n=0}^
\infty\frac{1}{n^{2m+1}}\frac{1}{e^{2n\pi y}-1}=\frac{1}{2\pi i}\int_{\frac{3}{2}-i\infty}^{\frac{3}{2}+i\infty}\frac{\rho_m(s)\D s}{ y^{s+m}},
\end{align}where the function \begin{align}
\rho_m(s):=\frac{2\pi\Gamma \left(\frac{s+1}{2}+m\right)R_m(s)}{\Gamma \left(\frac{s+1}{2}\right)\Gamma(m+1)\zeta(2m+2)}=\frac{\Gamma\left( \frac{s}{2} \right)\zeta(s)\Gamma\left( \frac{s+1+2m}{2} \right)\zeta(s+2m+1)}{\pi^{s-\frac{1}{2}}\Gamma(m+1)\zeta(2m+2)}
\label{eq:rho_m_s_SL2Z}\end{align}satisfies a reflection formula $ \rho_m(s-2m)=\rho_m(-s)$ for arbitrary $m$. Unlike the function $ R_m(s)$, the only singularities for $ \rho_m(s)$ are two simple poles at   $ s=0,1$. This is because in the expression for $ \rho_m(s)$,  the simple poles of the Euler gamma function at negative integers are cancelled out by the trivial zeros of the Riemann zeta function at negative even numbers. By residue calculus, we can deduce {\allowdisplaybreaks\begin{align}
\frac{1}{2\pi i}\int_{\frac{3}{2}-i\infty}^{\frac{3}{2}+i\infty}\frac{\rho_m(s)\D s}{ y^{s+m}}={}&\frac{1}{2\pi i}\int_{\frac{3}{2}-i\infty}^{\frac{3}{2}+i\infty}\frac{\rho_m(s-2m)\D s}{ y^{s-m}}+\frac{1}{y^{m+1}}-\frac{\sqrt{\pi}\Gamma(m+\frac{1}{2})}{\Gamma(m+1)}\frac{\zeta(2m+1)}{\zeta(2m+2)}\frac{1}{y^m}\notag\\={}&\frac{1}{2\pi i}\int_{-\frac{3}{2}-i\infty}^{-\frac{3}{2}+i\infty}\frac{\rho_m(s)\D s}{ y^{-s-m}}+\frac{1}{y^{m+1}}-\frac{\sqrt{\pi}\Gamma(m+\frac{1}{2})}{\Gamma(m+1)}\frac{\zeta(2m+1)}{\zeta(2m+2)}\frac{1}{y^m}\notag\\={}&\frac{1}{2\pi i}\int_{\frac{3}{2}-i\infty}^{\frac{3}{2}+i\infty}\frac{\rho_m(s)\D s}{ (1/y)^{s+m}}+\left(\frac{1}{y^{m+1}}-y^{m+1}\right)\notag\\{}&-\frac{\sqrt{\pi}\Gamma(m+\frac{1}{2})}{\Gamma(m+1)}\frac{\zeta(2m+1)}{\zeta(2m+2)}\left(\frac{1}{y^m}-y^{m}\right),\quad \forall y>0,
\end{align}}which is the claimed symmetry $ F(iy)=F(i/y),\forall y>0$.

Secondly, we show that \begin{align}
\left.\frac{\partial^n}{\partial(\R z)^n}\right|_{\R z=0}\left[F(z)-F\left( -\frac{1}{z} \right)\right]=0,\quad \I z>0,n\in\mathbb Z_{>0}.\label{eq:high_order_deriv_vanish}
\end{align}
Since the function $ F(z)$ obviously satisfies $ F(z)=F(-\overline z)$, the equation above is evident when $n$ is a positive odd integer. For positive even integers $ n=2\ell\in2\mathbb Z_{>0}$, we can build\begin{align}
\left.\frac{\partial^{2\ell}}{\partial(\R z)^{2\ell}}\right|_{\R z=0}\left[F(z)-F\left( -\frac{1}{z} \right)\right]=0,\quad \I z>0\tag{\ref{eq:high_order_deriv_vanish}$^{(2\ell)}$}
\end{align}inductively  on the property \ref{itm:EZF2}. Concretely speaking,
as we have \begin{align}
[\Delta_z^{\mathfrak H}-m(m+1)]\left[ (\I z)^{m}\left( \frac{\partial}{\partial\I z}\frac{1}{\I z} \right)^{m}\R h(z) \right]=0
\end{align}for any holomorphic function $ h(z),z\in\mathfrak H$, we can confirm the differential equation  $ [\Delta_z^{\mathfrak H}-m(m+1)]F(z)=0,z\in\mathfrak H$. By the invariance property of the Laplace operator $ \Delta_z^{\mathfrak H}$, one can also show that  $ [\Delta_z^{\mathfrak H}-m(m+1)]F(-1/z)=0,z\in\mathfrak H$. For any positive integer $k$, we can decompose the left-hand side of the following equation:\begin{align}
[\Delta_z^{\mathfrak H}-m(m+1)]^k\left[F(z)-F\left( -\frac{1}{z} \right)\right]=0,\quad\R z=0 ,\I z>0,
\end{align}so as to show that  the truthfulness of Eq.~\ref{eq:high_order_deriv_vanish}$ ^{(2k)}$ hinges on  that of  Eq.~\ref{eq:high_order_deriv_vanish}$ ^{(2\ell)}$ for $ \ell\in\mathbb Z\cap[0,k)$. (We count   $ F(iy)=F(i/y),\forall y>0$ as the case of Eq.~\ref{eq:high_order_deriv_vanish}$ ^{(0)}$.)
This completes the verification of Eq.~\ref{eq:high_order_deriv_vanish} for all $n\in\mathbb Z_{>0}$.

Thirdly, we point out that the function $ F(z)-F(-1/z)$, which is annihilated by the differential operator $ \Delta_z^{\mathfrak H}-m(m+1)$, admits a convergent power series in an open neighborhood of $ z=i$. With the information input from the last two paragraphs, we see that  $ F(z)-F(-1/z)=0$ holds in a certain open neighborhood of the point $ z=i$. By the principle of unique continuation \cite[][p.~262]{BersJohnSchechter1964},
 the function $ F(z)-F(-1/z)$ must vanish identically for all  $ z\in\mathfrak H$.

Finally, the asymptotic behavior $ F(iy)=y^s+o(y^s)$ in \ref{itm:EZF3} is evident from the right-hand side of Eq.~\ref{eq:Epstein_SL2Z_Ramanujan_series_repn}, which concludes the proof. \end{proof}
\begin{remark}
Indeed, the Ramanujan series for $ E^{\varGamma_0(1)}(m+1)$ (Eq.~\ref{eq:Epstein_SL2Z_Ramanujan_series_repn}) is equivalent to the $ s=m+1$  case in the following standard Fourier expansion (see \cite[][p.~65, Proposition 8.6]{Hejhal1983} and \cite[][p.~207]{GrossZagier1985}):\begin{align}
E^{\varGamma_0(1)}(z,s)={}&(\I z)^s+\frac{\sqrt{\pi}\Gamma(s-\frac{1}{2})}{\Gamma(s)}\frac{\zeta(2s-1)}{\zeta(2s)}\frac{1}{(\I z)^{s-1}}\notag\\&+\frac{2\pi^{s}}{\Gamma(s)\zeta(2s)}\sum_{n\in\mathbb Z\smallsetminus\{0\}}|n|^{s-\frac{1}{2}}\sigma_{1-2s}(n)\sqrt{\I z}K_{s-\frac{1}{2}}(2\pi|n|\I z)e^{2\pi in\R z}\label{eq:Epstein_SL2Z_Fourier_Hejhal}
\end{align}  where $ \sigma_\nu(n)=\sum_{d|n}d^\nu$. The equivalence can be seen from a rearrangement of the series \cite[][p.~277, Entry 12.3.9]{RLN4}\begin{align}\sum_{n=1}^
\infty\frac{1}{n^{2m+1}}\frac{1}{e^{2n\pi y}-1}=\sum_{n=1}^
\infty\sum_{\ell=1}^\infty\frac{e^{-2\ell n\pi y}}{n^{2m+1}}=\sum_{n=1}^
\infty \sigma _{-2m-1}(n)e^{-2n\pi y}\label{eq:divisor_sum_rearrangement}
\end{align}and the recursion relation between contiguous $K$-Bessel functions.  It is perhaps worth noting that our demonstration of Eq.~\ref{eq:Epstein_SL2Z_Ramanujan_series_repn} exploits only symmetry, and involves no explicit computations for the Fourier coefficients using Hejhal's double-coset decomposition \cite[][p.~65, Proposition 8.6]{Hejhal1983}. It is not hard to extend our method in proving Eq.~\ref{eq:Epstein_SL2Z_Fourier_Hejhal}  for $ s=m+1\in\mathbb Z_{>1}$ to generic $s$. We omit the details.   \eor\end{remark}\begin{remark}Special cases of  Eq.~\ref{eq:Epstein_SL2Z_Ramanujan_series_repn} lead to some interesting evaluations of infinite series. The simplest example among them might be\begin{align}
\sum_{n=1}^\infty\frac{1}{n^2\sinh^2(n\pi)}=\frac{2G}{3}-\frac{11\pi^2}{180}.\label{eq:G_sinh}
\end{align} To prove Eq.~\ref{eq:G_sinh},  one uses  Eq.~\ref{eq:Epstein_SL2Z_Ramanujan_series_repn} to spell out \begin{align} \frac{30G}{\pi^{2}}=E^{\varGamma_0(1)}(i,2)=1+\frac{45\zeta(3)}{\pi^{3}}+\frac{90}{\pi^3}\sum_{n=1}^\infty\frac{1}{n^3}\frac{1}{e^{2n\pi}-1}+\frac{45}{\pi^2}\sum_{n=1}^\infty\frac{1}{n^2\sinh^2(n\pi)},\end{align} and eliminates from the equation above the following identity: \begin{align}
\sum_{n=1}^\infty\frac{1}{n^3}\frac{1}{e^{2n\pi}-1}=\frac{7\pi^3}{360}-\frac{\zeta(3)}{2},
\end{align}which arises from a special case ($m=1, z=i$)  of Eq.~\ref{eq:Ramanujan_reflection_notebook}.  \eor\end{remark}
\section{Ramanujan Series for $ E^{\varGamma_0(4)}(z,s)$\label{sec:Epstein_Hecke4}}From  Eq.~\ref{eq:Epstein_HeckeN_expn}, we know that $ E^{\varGamma_0(1)}(\cdot,s)$ determines all the Epstein zeta functions $ E^{\varGamma_0(N)}(\cdot,s)$ on Hecke congruence groups $ \varGamma_0(N),N\in\mathbb Z_{>0}$. In the next lemma, we show that the function $ E^{\varGamma_0(4)}(\cdot,s)$,  $\R s>1$ also encodes the complete information for all the Epstein zeta functions   $ E^{\varGamma_0(N)}(\cdot,s)$, $N\in\mathbb Z_{>0}$,  $\R s>1$.
\begin{lemma}[Some Addition Formulae for Epstein Zeta Functions]For $ \R s>1$ and $ z\in\mathfrak H $, we have the following algebraic relations among Epstein zeta functions:{\allowdisplaybreaks\begin{align}
E^{\varGamma_0(4)}(z,s)={}&E^{\varGamma_0(4)}\left(z+\frac{1}{2},s\right),\label{eq:Epstein_Hecke4_shift}\\E^{\varGamma_0(2)}(z,s)={}&2^sE^{\varGamma_0(4)}\left( \frac{z}{2} ,s\right)=E^{\varGamma_0(4)}(z,s)+E^{\varGamma_0(4)}\left( -\frac{1}{2(2z+1)} ,s\right),\label{eq:Epstein_Hecke2_add_form}\\E^{\varGamma_0(1)}(z,s)={}&2^{s}\left[ E^{\varGamma_0(4)}\left( \frac{z}{2} ,s\right)+2^{s}E^{\varGamma_0(4)}\left( -\frac{1}{4z} ,s\right)  \right],\label{eq:Epstein_SL2Z_add_form}\\E^{\varGamma_0(2)}(z,s)={}&\frac{1}{2^s-2^{-s}}\left[ E^{\varGamma_0(1)}(2z,s)-\frac{E^{\varGamma_0(1)}(z,s)}{2^{s}} \right],\label{eq:Epstein_Hecke2_expn}\\E^{\varGamma_0(4)}(z,s)={}&\frac{2^{-s}}{2^s-2^{-s}}\left[ E^{\varGamma_0(1)}(4z,s)-\frac{E^{\varGamma_0(1)}(2z,s)}{2^{s}} \right].\label{eq:Epstein_Hecke4_expn}
\end{align}}We accordingly have the following asymptotic expansions near the cusps of $ \varGamma_0(4)\backslash\mathfrak H^*$:\begin{align}
E^{\varGamma_0(4)}(z,s)={}&(\I z)^s+O\left( \frac{1}{(\I z)^{s-1}} \right),&& z\to i\infty,\\E^{\varGamma_0(4)}(z,s)={}&\frac{2^{s-1}-2^{-s}}{2^s-2^{-s}}\frac{\sqrt{\pi}\Gamma(s-\frac{1}{2})}{2\Gamma(s)}\frac{\zeta(2s-1)}{\zeta(2s)}(\I z)^{s-1}+O(e^{-\frac{2\pi-0^{+}}{4\I z}}),&& z\to i0^+ \emph{ or }\frac{1}{2}+i0^+.\label{eq:Hecke4_bottom_cusp_expn}
\end{align}\end{lemma}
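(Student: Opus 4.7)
My plan is to treat the Möbius-type identity (\ref{eq:Epstein_HeckeN_expn}) as the master formula and reduce every assertion in the lemma to an algebraic manipulation of $E^{\varGamma_0(1)}(\cdot,s)$ combined with its $SL(2,\mathbb{Z})$-invariances under $z\mapsto z+1$ and $z\mapsto -1/z$. Specializing (\ref{eq:Epstein_HeckeN_expn}) to $N=2$ (divisors $1,2$) and $N=4$ (divisors $1,2,4$ with $\mu(4)=0$), and simplifying the prefactors via $\frac{1}{2^s(1-2^{-2s})}=\frac{1}{2^s-2^{-s}}$ and $\frac{1}{4^s(1-2^{-2s})}=\frac{2^{-s}}{2^s-2^{-s}}=\frac{1}{4^s-1}$, reads off (\ref{eq:Epstein_Hecke2_expn}) and (\ref{eq:Epstein_Hecke4_expn}) directly. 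The half-period shift (\ref{eq:Epstein_Hecke4_shift}) then drops out of (\ref{eq:Epstein_Hecke4_expn}), since both $E^{\varGamma_0(1)}(4z+2,s)=E^{\varGamma_0(1)}(4z,s)$ and $E^{\varGamma_0(1)}(2z+1,s)=E^{\varGamma_0(1)}(2z,s)$ hold by integer-translation invariance.

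The first equality in (\ref{eq:Epstein_Hecke2_add_form}) is a one-line algebraic check: evaluate (\ref{eq:Epstein_Hecke4_expn}) at $z/2$, multiply by $2^s$, and compare with (\ref{eq:Epstein_Hecke2_expn}) through the identity $\frac{2^s}{4^s-1}=\frac{1}{2^s-2^{-s}}$. For (\ref{eq:Epstein_SL2Z_add_form}), I would substitute $z\mapsto -1/(4z)$ in (\ref{eq:Epstein_Hecke4_expn}), use $E^{\varGamma_0(1)}(-1/(kz),s)=E^{\varGamma_0(1)}(kz,s)$ for $k\in\{1,2\}$, and add $2^s$ times (\ref{eq:Epstein_Hecke4_expn}) at $z/2$; the coefficient of $E^{\varGamma_0(1)}(2z,s)$ collapses as $2^s-4^s/2^s=0$, while the coefficient of $E^{\varGamma_0(1)}(z,s)$ reduces to $(4^s-1)/(4^s-1)=1$.

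The main obstacle is the second equality in (\ref{eq:Epstein_Hecke2_add_form}), because $E^{\varGamma_0(1)}(-2/(2z+1),s)$ does not lie in the $SL(2,\mathbb{Z})$-orbit of any $E^{\varGamma_0(1)}(kz,s)$ with integer $k$, so the purely algebraic route of the preceding paragraph fails. Here I would argue group-theoretically: since $[\varGamma_0(2):\varGamma_0(4)]=2$ and $\gamma_0:=\bigl(\begin{smallmatrix}1&0\\2&1\end{smallmatrix}\bigr)\in\varGamma_0(2)\smallsetminus\varGamma_0(4)$, one has $\varGamma_0(2)=\varGamma_0(4)\sqcup\varGamma_0(4)\gamma_0$, and because the upper-triangular stabilizer of the cusp at $i\infty$ sits identically in both groups, summing the defining series of $E^{\varGamma_0(2)}(z,s)$ over the two cosets yields $E^{\varGamma_0(2)}(z,s)=E^{\varGamma_0(4)}(z,s)+E^{\varGamma_0(4)}(\gamma_0 z,s)$ with $\gamma_0 z=z/(2z+1)$. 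The closing arithmetic identity $\frac{z}{2z+1}=\frac{-1}{2(2z+1)}+\frac{1}{2}$, combined with (\ref{eq:Epstein_Hecke4_shift}), rewrites $E^{\varGamma_0(4)}(\gamma_0 z,s)$ as $E^{\varGamma_0(4)}(-1/(2(2z+1)),s)$, exactly the claimed summand.

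For the asymptotic expansions I would feed (\ref{eq:Epstein_SL2Z_asympt}) into (\ref{eq:Epstein_Hecke4_expn}). At $z\to i\infty$ both $2z$ and $4z$ sit deep in a $\varGamma_0(1)$-fundamental domain, and the prefactor $1/(4^s-1)$ is calibrated so that $4^s(\I z)^s-2^s\cdot 2^{-s}(\I z)^s=(4^s-1)(\I z)^s$ produces the announced leading coefficient $1$. At the cusp $0$ I would first use $S$-invariance to write $E^{\varGamma_0(1)}(kz,s)=E^{\varGamma_0(1)}(-1/(kz),s)$ for $k\in\{2,4\}$, whereupon (\ref{eq:Epstein_SL2Z_asympt}) applies in the variable $1/(k\I z)$; the two singular $(\I z)^{-s}$-terms cancel exactly, and the surviving $(\I z)^{s-1}$-coefficient collapses to $\frac{C_s(4^s-2)}{4(4^s-1)}=\frac{2^{s-1}-2^{-s}}{2^s-2^{-s}}\cdot\frac{C_s}{2}$, with $C_s:=\frac{\sqrt{\pi}\Gamma(s-\frac{1}{2})}{\Gamma(s)}\frac{\zeta(2s-1)}{\zeta(2s)}$. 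The slower of the two exponential errors, $O(e^{-(2\pi-0^+)/(4\I z)})$, is inherited from $E^{\varGamma_0(1)}(4z,s)$; the asymptotic at $1/2+i0^+$ is then a free consequence of (\ref{eq:Epstein_Hecke4_shift}).
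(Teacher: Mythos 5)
Your proof is correct, but it reaches the lemma by a genuinely different route than the paper. The paper derives Eqs.~\ref{eq:Epstein_Hecke4_shift}, \ref{eq:Epstein_Hecke2_add_form} and \ref{eq:Epstein_SL2Z_add_form} as $\I z'\to+\infty$ limits of addition formulae for the automorphic Green's functions $G_s^{\mathfrak H/\overline{\varGamma}_0(N)}$ quoted from an earlier work, and proves the half-period shift by exhibiting $\left(\begin{smallmatrix}1&1/2\\0&1\end{smallmatrix}\right)$ as a normalizer of $\varGamma_0(4)$; only Eqs.~\ref{eq:Epstein_Hecke2_expn}--\ref{eq:Epstein_Hecke4_expn} are read off from the M\"obius identity (Eq.~\ref{eq:Epstein_HeckeN_expn}). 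You invert this logic: taking Eq.~\ref{eq:Epstein_HeckeN_expn} as the master formula, every assertion except the second equality of Eq.~\ref{eq:Epstein_Hecke2_add_form} reduces to algebra in $E^{\varGamma_0(1)}$ combined with its invariance under $z\mapsto z+1$ and $z\mapsto-1/z$ --- your coefficient computations for Eqs.~\ref{eq:Epstein_SL2Z_add_form}, \ref{eq:Hecke4_bottom_cusp_expn} and the leading cusp term at $i\infty$ all check out, and you obtain the half-period shift for free rather than via the normalizer. For the one identity that resists this reduction, your coset decomposition $\varGamma_0(2)=\varGamma_0(4)\sqcup\varGamma_0(4)\left(\begin{smallmatrix}1&0\\2&1\end{smallmatrix}\right)$, applied to the absolutely convergent defining series (legitimate since the stabilizer of $i\infty$ is $\{\pm T^n\}$ in both groups), followed by $\frac{z}{2z+1}=\frac{1}{2}-\frac{1}{2(2z+1)}$ and Eq.~\ref{eq:Epstein_Hecke4_shift}, is a valid and self-contained substitute for the paper's appeal to the Green's-function addition formula of the cited prior work. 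What your route buys is independence from that external input and a more elementary presentation; what the paper's route buys is that the decompositions are simultaneously established at the level of $G_s^{\mathfrak H/\overline{\varGamma}_0(N)}$ itself, which is the object the author actually manipulates elsewhere in the series of papers.
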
\begin{proof}One can verify the relation \begin{align}G_s^{\mathfrak H/\overline{\varGamma}_0(4)}(z,z')=G_s^{\mathfrak H/\overline{\varGamma}_0(4)}\left(z+\frac{1}{2},z'+\frac{1}{2}\right)\label{eq:Hecke4_double_shift}\end{align}  from the fact that $\left( \begin{smallmatrix}1&1/2\\0&1\end{smallmatrix} \right):z\mapsto z+\frac{1}{2}$ normalizes the Hecke congruence group $ \varGamma_0(4)$:\begin{align}\begin{pmatrix}1 & \frac{1}{2} \\
0 & 1 \\
\end{pmatrix}\begin{pmatrix}a & b \\
4c & d \\
\end{pmatrix}=\begin{pmatrix}a+2c & b-c+\frac{d-a}{2} \\
4c & d-2c \\
\end{pmatrix}\begin{pmatrix}1 & \frac{1}{2} \\
0 & 1 \\
\end{pmatrix},\quad\text{where }a,b,c,d\in\mathbb Z,ad-4bc=1.\label{eq:Hecke4_double_shift_reason}\end{align}
Thus,
Eq.~\ref{eq:Epstein_Hecke4_shift} follows from asymptotic analysis on Eq.~\ref{eq:Hecke4_double_shift}.

From the following addition formula for automorphic Green's functions \cite[][Eq.~2.2.7]{AGF_PartI}\begin{align}
G_s^{\mathfrak H/\overline{\varGamma}_0(2)}(z,z')={}&G_s^{\mathfrak H/\overline{\varGamma}_0(4)}\left( \frac{z}{2} ,\frac{z'}{2}\right)+G_s^{\mathfrak H/\overline{\varGamma}_0(4)}\left( \frac{z+1}{2} ,\frac{z'}{2}\right),
\end{align} one can deduce the corresponding addition formula for Epstein zeta functions:\begin{align}
E^{\varGamma_0(2)}(z,s)={}&\frac{1-2s}{4\pi}\lim_{\I z'\to+\infty}(\I z')^{s-1}G_2^{\mathfrak H/\overline{\varGamma}_0(2)}(z,z')\notag\\={}&\frac{1-2s}{4\pi}2^{s-1}\lim_{\I z'\to+\infty}\left( \I\frac{z'}{2} \right)^{s-1}\left[ G_2^{\mathfrak H/\overline{\varGamma}_0(4)}\left( \frac{z}{2} ,\frac{z'}{2}\right)+G_2^{\mathfrak H/\overline{\varGamma}_0(4)}\left( \frac{z+1}{2} ,\frac{z'}{2}\right) \right]\notag\\={}&2^{s-1}\left[ E^{\varGamma_0(4)}\left( \frac{z}{2},s \right)+E^{\varGamma_0(4)}\left( \frac{z+1}{2},s \right)\right]=2^sE^{\varGamma_0(4)}\left( \frac{z}{2},s \right),
\end{align}which forms the first equality in Eq.~\ref{eq:Epstein_Hecke2_add_form}.  Similarly, the limit behavior of another addition formula  \cite[][Eq.~2.2.8]{AGF_PartI}\begin{align}
G_s^{\mathfrak H/\overline{\varGamma}_0(2)}(z,z')={}&G_s^{\mathfrak H/\overline{\varGamma}_0(4)}\left( z+\frac{1}{2} ,z'+\frac{1}{2}\right)+G_s^{\mathfrak H/\overline{\varGamma}_0(4)}\left( -\frac{1}{2(2z+1)} ,z'+\frac{1}{2}\right)
\end{align}brings us the second equality in Eq.~\ref{eq:Epstein_Hecke2_add_form}.

We can reformulate \cite[][Eq.~2.2.6]{AGF_PartI}  into the following form:\begin{align}&
G_s^{\mathfrak H/PSL(2,\mathbb Z)}(z,z')\notag\\={}&\left[ G_s^{\mathfrak H/\overline{\varGamma}_0(4)}\left( \frac{z}{2} ,\frac{z'}{2}\right)+G_s^{\mathfrak H/\overline{\varGamma}_0(4)}\left( \frac{z+1}{2} ,\frac{z'}{2}\right) \right]+\left[ G_s^{\mathfrak H/\overline{\varGamma}_0(4)}\left( -\frac{1}{2z} ,\frac{z'}{2}\right)+G_s^{\mathfrak H/\overline{\varGamma}_0(4)}\left( -\frac{1}{2z} +\frac{1}{2},\frac{z'}{2}\right) \right]\notag\\& +\left[G_s^{\mathfrak H/\overline{\varGamma}_0(4)}\left(-\frac{1}{2(-\frac{1}{z}+1)},\frac{z'}{2}\right)+G_s^{\mathfrak H/\overline{\varGamma}_0(4)}\left(-\frac{1}{2(-\frac{1}{z}+1)}+\frac{1}{2},\frac{z'}{2}\right)\right],
\end{align}which leads us to\begin{align}
2^{-s}E^{\varGamma_0(1)}(z,s)=E^{\varGamma_0(4)}\left( \frac{z}{2} ,s\right)+E^{\varGamma_0(4)}\left( -\frac{1}{2z} ,s\right) +E^{\varGamma_0(4)}\left( -\frac{1}{2(-\frac{1}{z}+1)} ,s\right).
\end{align}As we combine the last two addends using  Eq.~\ref{eq:Epstein_Hecke2_add_form}, we see that Eq.~\ref{eq:Epstein_SL2Z_add_form} is true.

The identities in Eqs.~\ref{eq:Epstein_Hecke2_expn} and \ref{eq:Epstein_Hecke4_expn} follow immediately from Eq.~\ref{eq:Epstein_HeckeN_expn}.
 Comparing Eqs.~\ref{eq:Epstein_Hecke2_expn} and \ref{eq:Epstein_Hecke4_expn}, we also recover the first equality in  Eq.~\ref{eq:Epstein_Hecke2_add_form}.

As we have the limit behavior of $E^{\varGamma_0(1)}(z,s)$  (Eq.~\ref{eq:Epstein_SL2Z_asympt}) and $  \sum_{d\mid N}\mu(d)d^{-2s}=\prod_{p\mid N}(1-p^{-2s}) $, the asymptotic expansion $E^{\varGamma_0(N)}(z,s)=(\I z)^s+O((\I z)^{1-s})$,  $z\to i\infty $ is true for all $ N\in\mathbb Z_{>0}$ \cite[][p.~240]{GrossZagierI}.  As $ z\to i0^+$, we can use the $ SL(2,\mathbb Z)$-invariance of $ E^{\varGamma_0(1)}(z,s)$ to argue that \begin{align}
E^{\varGamma_0(4)}(z,s)={}&\frac{2^{-s}}{2^s-2^{-s}}\left[ E^{\varGamma_0(1)}\left( -\frac{1}{4z},s \right)-\frac{1}{2^{s}} E^{\varGamma_0(1)}\left( -\frac{1}{2z},s \right)\right]\notag\\={}&\frac{2^{-s}}{2^s-2^{-s}}\frac{\sqrt{\pi}\Gamma(s-\frac{1}{2})}{\Gamma(s)}\frac{\zeta(2s-1)}{\zeta(2s)}\left\{\frac{1}{[\I (-\frac{1}{4z})]^{s-1}}-\frac{1}{2^{s}}\frac{1}{[\I (-\frac{1}{2z})]^{s-1}}\right\}+O(e^{-\frac{2\pi-0^{+}}{4\I z}}),
\end{align}thereby confirming Eq.~\ref{eq:Hecke4_bottom_cusp_expn}.\end{proof}\begin{lemma}[A Reflection Formula of Ramanujan Type]For $ z\in\mathfrak H,m\in\mathbb Z_{>0}$, we have the following identity:\begin{align}&
\frac{1}{(2z/i)^m}\sum_{n=0}^\infty\frac{1}{(2n+1)^{2m+1}}\frac{1}{e^{(2n+1)\pi\frac{2z}{i} }+1}-(-2z/i)^{m}\sum_{n=0}^\infty\frac{1}{(2n+1)^{2m+1}}\frac{1}{e^{(2n+1)\pi\frac{i}{2z} }+1}\notag\\={}&\frac{1-2^{-2m-1}}{2}\zeta(2m+1)\left[\frac{1}{(2z/i)^{m}}-(-2z/i)^{m}\right]+\frac{2}{\pi}\sum_{k=0}^{m-1}\frac{\zeta(2k+2)\zeta(2m-2k)(2^{-2k-2}-1)(1-2^{2k-2m})}{(-1)^{k}(2z/i)^{m-2k-1}}.\label{eq:Ramanujan_reflection}
\end{align}\end{lemma}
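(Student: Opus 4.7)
My plan is to adapt the Mellin inversion argument used for Eq.~\ref{eq:Ramanujan_reflection_notebook}, with the Dirichlet series $(1-2^{1-s})\zeta(s)=\sum_{k\geq 1}(-1)^{k-1}k^{-s}$ and $(1-2^{-s})\zeta(s)=\sum_{n\geq 0}(2n+1)^{-s}$ replacing the Riemann zeta function at appropriate places. Setting $y=2z/i$ and first assuming $y>0$ (the extension to $z\in\mathfrak H$ will follow by analytic continuation of both sides, which are manifestly holomorphic throughout $\mathfrak H$), I would start from the classical Mellin pair $\int_0^\infty t^{s-1}/(e^t+1)\,\D t=\Gamma(s)(1-2^{1-s})\zeta(s)$ for $\R s>0$, and apply Mellin inversion followed by summation over $n$ to get
\begin{align*}
S_m(y):=\sum_{n=0}^\infty\frac{1}{(2n+1)^{2m+1}}\frac{1}{e^{(2n+1)\pi y}+1}=\frac{1}{2\pi i}\int_{\frac{3}{2}-i\infty}^{\frac{3}{2}+i\infty}\tilde R_m(s)y^{-s}\,\D s,
\end{align*}
where $\tilde R_m(s):=\Gamma(s)(1-2^{1-s})\zeta(s)(1-2^{-s-2m-1})\zeta(s+2m+1)/\pi^s$. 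Using the functional equation $\Gamma(s)\zeta(s)=2^s\pi^s\zeta(1-s)/[2\cos(\pi s/2)]$ together with the algebraic identities $(2^s-2)(1-2^{-s-2m-1})=-2(1-2^{s-2m-1})(1-2^{-s-1})$ and $\cos(\pi(s-2m)/2)=(-1)^m\cos(\pi s/2)$, I would next verify the reflection identity $\tilde R_m(s-2m)=(-1)^m\tilde R_m(-s)$, precisely parallel to the one enjoyed by $R_m$.

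Next, I would analyze the poles of $\tilde R_m(s)$ in the strip $\tfrac{1}{2}-2m<\R s<\tfrac{3}{2}$. The trivial zeros $\zeta(-2k)=0$ cancel the $\Gamma$-poles at the negative even integers, and crucially the factor $(1-2^{1-s})$ annihilates the $\zeta(s)$-pole at $s=1$; this cancellation at $s=1$ is the structural reason that no $\zeta(2m+2)$ term appears on the right-hand side of Eq.~\ref{eq:Ramanujan_reflection}. The surviving poles in the strip are simple: one at $s=0$ and one at $s=-(2k+1)$ for each $k\in\mathbb Z\cap[0,m-1]$. Evaluating these residues by means of $\zeta(-(2k+1))=-B_{2k+2}/(2k+2)$, the Euler formula $\zeta(2k+2)=(-1)^k(2\pi)^{2k+2}B_{2k+2}/[2(2k+2)!]$, and the identity $2^{-2k-2}-1=-(2^{2k+2}-1)/2^{2k+2}$, and multiplying by $y^{-s-m}$, the $s=0$ residue produces $\tfrac{1}{2}(1-2^{-2m-1})\zeta(2m+1)y^{-m}$, while the $s=-(2k+1)$ residues assemble into exactly the finite sum on the right-hand side of Eq.~\ref{eq:Ramanujan_reflection}.

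Finally, I would shift the contour from $\R s=\tfrac{3}{2}$ down to $\R s=\tfrac{1}{2}-2m$, pick up the residues above, and apply the reflection identity (followed by the substitution $s\mapsto -s-2m$) to transform the residual integral into $(-y)^m$ times the same type of Mellin integral at $\R s=-\tfrac{1}{2}$ with $y$ replaced by $1/y$. A second contour shift from $\R s=-\tfrac{1}{2}$ up to $\R s=\tfrac{3}{2}$ crosses only the simple pole at $s=0$ and produces $(-y)^m S_m(1/y)-\tfrac{1}{2}(1-2^{-2m-1})\zeta(2m+1)(-y)^m$, which combines with the $s=0$ residue harvested earlier to give the bracketed $[y^{-m}-(-y)^m]$ term in Eq.~\ref{eq:Ramanujan_reflection}. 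Analytic continuation in $z$ then removes the temporary restriction $y>0$. The main obstacle will be the careful bookkeeping required to identify the residue coefficients with the precise target factors $(2^{-2k-2}-1)(1-2^{2k-2m})$; the contour-shift estimates themselves, based on Stirling's formula and Phragm\'en--Lindel\"of growth bounds on $\tilde R_m(s)$ in vertical strips, present no new difficulty.
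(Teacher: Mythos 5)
Your proposal follows the paper's proof essentially verbatim: the same Mellin kernel $r_m(s)=\Gamma(s)\zeta(s)(1-2^{1-s})(1-2^{-s-2m-1})\zeta(s+2m+1)\pi^{-s}$, the same reflection formula $r_m(s-2m)=(-1)^m r_m(-s)$, the same surviving simple poles at $s=0$ and $s=-(2k+1)$ for $k\in\mathbb Z\cap[0,m-1]$ (with the $s=1$ pole killed by $1-2^{1-s}$ and the $\Gamma$-poles at negative even integers killed by trivial zeros), and the same pair of contour shifts. One small correction: the auxiliary two-power identity should equate the factor at $s-2m$ with the factor at $-s$, namely $(2^{s-2m}-2)(1-2^{-s-1})=(2^{-s}-2)(1-2^{s-2m-1})=-2\,(1-2^{s-2m-1})(1-2^{-s-1})$; as you wrote it, with $(2^{s}-2)(1-2^{-s-2m-1})$ on the left-hand side, the identity is false, although the reflection formula you deduce from it is correct.
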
\begin{proof}We   use the Mellin transform and contour deformation, as in the proof of Eq.~\ref{eq:Ramanujan_reflection_notebook}.

Without loss of generality, we momentarily restrict our analysis to the case where $ z/i>0$, and use the following Mellin inversion formula \cite[][p.~312, \S6.3, Eq.~6]{ET1}:\begin{align}
\frac{1}{e^{(2n+1)\pi y}+1}=\frac{1}{2\pi i}\int_{\frac{1}{2}-i\infty}^{\frac{1}{2}+i\infty}\frac{\Gamma(s)\zeta(s)(1-2^{1-s})\D s}{[(2n+1)\pi y]^{s}},\quad y>0,
\end{align}which results in\begin{align}&
\frac{1}{y^m}\sum_{n=0}^\infty\frac{1}{(2n+1)^{2m+1}}\frac{1}{e^{(2n+1)\pi y}+1}=\frac{1}{2\pi i}\int_{\frac{1}{2}-i\infty}^{\frac{1}{2}+i\infty}\frac{r_m(s)\D s}{ y^{s+m}},\quad y>0,\notag\\&\text{where }r_m(s)=\frac{\Gamma(s)\zeta(s)\zeta(s+2m+1)(1-2^{1-s})(1-2^{-1-2m-s})}{\pi^{s}}.
\end{align} We note that the expression  $ r_m(s)=\zeta(1-s)\zeta(s+2m+1)(2^{s-1}-1)(1-2^{-1-2m-s})/\sin\frac{\pi(1-s)}{2}$ satisfies a  functional equation $ r_m(-s)=[\cos(m\pi)+\sin(m\pi)\tan\frac{\pi s}{2}]r_m(s-2m)$, which  reduces to a reflection formula $ r_m(-s)=(-1)^mr_m(s-2m)$ for  $m\in\mathbb Z_{>0}$. By residue calculus, we have\begin{align}
\frac{1}{2\pi i}\int_{\frac{1}{2}-i\infty}^{\frac{1}{2}+i\infty}\frac{r_m(s)\D s}{ y^{s+m}}={}&\frac{1}{2\pi i}\int_{\frac{1}{2}-i\infty}^{\frac{1}{2}+i\infty}\frac{r_m(s-2m)\D s}{ y^{s-m}}+\frac{1-2^{-2m-1}}{2}\frac{\zeta(2m+1)}{y^{m}}\notag\\{}&+\frac{2}{\pi}\sum_{k=0}^{m-1}\frac{\zeta(2k+2)\zeta(2m-2k)(2^{-2k-2}-1)(1-2^{2k-2m})}{(-1)^{k}y^{m-2k-1}}
\end{align} and\begin{align}
\frac{1}{2\pi i}\int_{\frac{1}{2}-i\infty}^{\frac{1}{2}+i\infty}\frac{r_m(s-2m)\D s}{ y^{s-m}}={}&\frac{(-y)^m}{2\pi i}\int_{\frac{1}{2}-i\infty}^{\frac{1}{2}+i\infty}\frac{r_m(-s)\D s}{ y^{s}}=\frac{(-y)^m}{2\pi i}\int_{-\frac{1}{2}-i\infty}^{-\frac{1}{2}+i\infty}\frac{r_m(s)\D s}{ (1/y)^{s}}\notag\\={}&\frac{(-y)^m}{2\pi i}\int_{\frac{1}{2}-i\infty}^{\frac{1}{2}+i\infty}\frac{r_m(s)\D s}{ (1/y)^{s}}-(-y)^{m}\frac{1-2^{-2m-1}}{2}\zeta(2m+1).
\end{align}This proves Eq.~\ref{eq:Ramanujan_reflection} for $ z/i>0$, and the generic case hinges on analytic continuation.
 \end{proof}\begin{remark}It is worth pointing out that, for $ z/i>0$,  the identity in Eq.~\ref{eq:Ramanujan_reflection} had been previously proved by Berndt \cite[][Theorem 4.7]{Berndt1978}, as a special case of his modular transformation formula for generalized Eisenstein series \cite[][Theorem 4.6]{Berndt1978}. I thank an anonymous referee for bringing Berdnt's work \cite{Berndt1978} to my attention. \eor\end{remark}
 
 We can now move on to the justification of Eq.~\ref{eq:Epstein_Hecke4_Ramanujan_series_repn}.\begin{proof}[Proof of Theorem~\ref{thm:Ramanujan_series}\ref{itm:Epstein_Hecke4}]First, we point out  a variation on Lemma~\ref{lm:EZF}, which uniquely characterizes the function $ \varPhi(z)=E^{\varGamma_0(4)}(z,s)$ as a smooth mapping from $ \mathfrak H$ to $ \mathbb R$ satisfying the properties  below:\begin{enumerate}[leftmargin=*,  label={(EZF\arabic*$'$)},
widest=a, align=left]\item \label{itm:EZF1'}{(\textbf{Symmetry})} $ \varPhi(z)=\varPhi((2z+1)/2)=\varPhi(-(2z+1)/(8z+2)),\forall z\in\mathfrak H$,\item \label{itm:EZF2'}{(\textbf{Differential Equation})} $ \Delta_z^\mathfrak H\varPhi(z)=s(s-1)\varPhi(z),\forall z\in\mathfrak H$,\item \label{itm:EZF3'}{(\textbf{Asymptotic Behavior})} $ \varPhi(iy)=y^{s}+o(y^s),\mathbb R\ni  y\to+\infty$; $ \varPhi(iy)=O(1),\mathbb R\ni  y\to0^+$.\end{enumerate}Here, in writing the conditions in \ref{itm:EZF1'}, we have effectively tested $ \varGamma_0(2)$-invariance of the function $ E^{\varGamma_0(4)}(z,s)=2^{-s}E^{\varGamma_0(2)}(2z,s)$ (Eq.~\ref{eq:Epstein_Hecke2_add_form}) on the two generators   $ \hat{T}=\left(\begin{smallmatrix}1&1\\0&1\end{smallmatrix}\right)$ and $ \hat{V}_1=\left(\begin{smallmatrix}1&1\\-2&-1\end{smallmatrix}\right)$~\cite[][Theorem~4.3]{ApostolVol2} for the projective Hecke congruence group $\overline{\varGamma}_0(2) $.

As we abbreviate our proposed identity  Eq.~\ref{eq:Epstein_Hecke4_Ramanujan_series_repn} into the form $ E^{\varGamma_0(4)}(-1/(4z),m+1)=\varPsi(2z+1)$, we see that  \ref{itm:EZF1'} is equivalent to the condition $ \varPsi(2z+1)=\varPsi(2z-1)=\varPsi(-1/(2z+1))$. Meanwhile, the condition  \ref{itm:EZF2'} is obviously satisfied by our proposed formula, and  the right-hand side of   Eq.~\ref{eq:Epstein_Hecke4_Ramanujan_series_repn}   goes to zero as $ z\to i\infty$. Thus, it remains to check    $\varPsi(2iy+1)=(4y)^{-m-1}+o(y^{-m-1}),\mathbb R\ni y\to0^+ $ before we can verify     \ref{itm:EZF3'}.

In this paragraph, we check   \ref{itm:EZF1'} in the form of  $ \varPsi(2z+1)=\varPsi(2z-1)=\varPsi(-1/(2z+1))$. The condition $ \varPsi(2z+1)=\varPsi(2z-1)$ follows directly from the periodicity of the exponential function. To establish the inversion symmetry   $ \varPsi(2z+1)=\varPsi(-1/(2z+1))$, it would suffice to verify $ \varPsi(iy)=\varPsi(i/y),\forall y>0$ (cf.~the proof of Eq.~\ref{eq:Epstein_SL2Z_Ramanujan_series_repn}). When $ 2z+1=iy$ with $ y>0$, we can deduce
\begin{align}&
\frac{\pi(\I z)^{m}}{(-2)^{m-1}(2^{2m+2}-1)\Gamma(m+1)\zeta(2m+2)}\left( \frac{\partial}{\partial\I z}\frac{1}{\I z} \right)^{m}\R\sum_{n=0}^\infty\frac{1}{(2n+1)^{2m+1}}\frac{1}{e^{(2n+1)\pi\frac{2z+1}{i}}+1}\notag\\={}&\frac{2\pi y^{m}}{(-1)^{m-1}(2^{2m+2}-1)\Gamma(m+1)\zeta(2m+2)}\left( \frac{\partial}{\partial y}\frac{1}{y} \right)^{m}\sum_{n=0}^\infty\frac{1}{(2n+1)^{2m+1}}\frac{1}{e^{(2n+1)\pi y}+1}\notag\\={}&\frac{1}{2\pi i}\int_{\frac{1}{2}-i\infty}^{\frac{1}{2}+i\infty}\frac{\varrho_m(s)\D s}{ y^{s+m}},
\end{align}where\begin{align}
\varrho_m(s):={}&-\frac{2^{m+1}\pi\Gamma \left(\frac{s+1}{2}+m\right)r_m(s)}{(2^{2m+2}-1)\Gamma \left(\frac{s+1}{2}\right)\Gamma(m+1)\zeta(2m+2)}\notag\\={}&-\frac{2^{m+1}\pi}{2^{2 m+2}-1 }\frac{(2^{s-1}-1)(1-2^{-1-2m-s})}{\pi ^{s+\frac{1}{2}} }\frac{  \Gamma \left(\frac{s}{2}\right) \zeta (s) \Gamma \left(\frac{s+1}{2}+m\right)\zeta (s+2m+1)}{ \Gamma (m+1)\zeta (2 m+2)}
\end{align}satisfies a reflection formula $ \varrho_m(s-2m)=\varrho_m(-s)$ for whatever $m\in\mathbb C$.
The only singularity of $ \varrho_m(s)$ is a simple pole at $ s=0$. Thus, by contour deformation and residue calculus, we arrive at\begin{align}
\frac{1}{2\pi i}\int_{\frac{1}{2}-i\infty}^{\frac{1}{2}+i\infty}\frac{\varrho_m(s)\D s}{ y^{s+m}}={}&\frac{1}{2\pi i}\int_{\frac{1}{2}-i\infty}^{\frac{1}{2}+i\infty}\frac{\varrho_m(s-2m)\D s}{ y^{s-m}}-\frac{2^{2m+1}-1}{2^{2m+2}-1}\frac{\sqrt{\pi}\Gamma(m+\frac{1}{2})}{2\Gamma(m+1)}\frac{\zeta(2m+1)}{\zeta(2m+2)}\frac{1}{(2y)^m}\notag\\={}&\frac{1}{2\pi i}\int_{-\frac{1}{2}-i\infty}^{-\frac{1}{2}+i\infty}\frac{\varrho_m(s)\D s}{ y^{-s-m}}-\frac{2^{2m+1}-1}{2^{2m+2}-1}\frac{\sqrt{\pi}\Gamma(m+\frac{1}{2})}{2\Gamma(m+1)}\frac{\zeta(2m+1)}{\zeta(2m+2)}\frac{1}{(2y)^m}\notag\\={}&\frac{1}{2\pi i}\int_{\frac{1}{2}-i\infty}^{\frac{1}{2}+i\infty}\frac{\varrho_m(s)\D s}{ (1/y)^{s+m}}-\frac{2^{2m+1}-1}{2^{2m+2}-1}\frac{\sqrt{\pi}\Gamma(m+\frac{1}{2})}{2^{m+1}\Gamma(m+1)}\frac{\zeta(2m+1)}{\zeta(2m+2)}\left( \frac{1}{y^m}-y^m \right),
\end{align}for all $ y>0$.
Therefore, we have  $ \varPsi(iy)=\varPsi(i/y),\forall y>0$, as claimed.

We now wrap up our proof with the confirmation of     $\varPsi(2iy+1)=(4y)^{-m-1}+o(y^{-m-1}),\mathbb R\ni y\to0^+ $. We may compute{\allowdisplaybreaks\begin{align}
\varPsi(2iy+1)={}&\frac{2^{2m+1}-1}{2^{2m+2}-1}\frac{\sqrt{\pi}\Gamma(m+\frac{1}{2})}{2\Gamma(m+1)}\frac{\zeta(2m+1)}{\zeta(2m+2)}\frac{1}{(4y)^m}\notag\\{}&-\frac{\pi y^{m}}{(-2)^{m-1}(2^{2m+2}-1)\Gamma(m+1)\zeta(2m+2)}\left( \frac{\partial}{\partial y}\frac{1}{y} \right)^{m}\R\sum_{n=0}^\infty\frac{1}{(2n+1)^{2m+1}}\frac{1}{e^{2(2n+1)\pi y}-1}\notag\\={}&\frac{2^{2m+1}-1}{2^{2m+2}-1}\frac{\sqrt{\pi}\Gamma(m+\frac{1}{2})}{2\Gamma(m+1)}\frac{\zeta(2m+1)}{\zeta(2m+2)}\frac{1}{(4y)^m}+\frac{1}{2\pi i}\int_{\frac{3}{2}-i\infty}^{\frac{3}{2}+i\infty}\frac{(1-2^{-1-2m-s})\rho_m(s)\D s}{ (2^{2m+2}-1)y^{s+m}}\notag\\={}&\frac{1}{(4y)^{m+1}}+\frac{1}{2\pi i}\int_{-\frac{1}{2}-i\infty}^{-\frac{1}{2}+i\infty}\frac{(1-2^{-1-2m-s})\rho_m(s)\D s}{ (2^{2m+2}-1)y^{s+m}} ,\label{eq:Epstein_Hecke4_asympt_contour}
\end{align}}where $ \rho_m(s)$ was defined in Eq.~\ref{eq:rho_m_s_SL2Z}. Here, in the last step of Eq.~\ref{eq:Epstein_Hecke4_asympt_contour}, we  have collected residues at $ s=1$ and $ s=0$. The remaining integral over the vertical line $ \R s=-1/2$ clearly contributes $ o(y^{-m-1})$ to $ \varPsi(iy+1)$, as desired. This completes our verification of the qualifications in \ref{itm:EZF1'}--\ref{itm:EZF3'}, so the right-hand side of  Eq.~\ref{eq:Epstein_Hecke4_Ramanujan_series_repn} is indeed a valid representation of $ E^{\varGamma_0(4)}(-1/(4z),m+1)$ for any  $ z\in\mathfrak H$ and $m\in\mathbb Z_{>0}$.
   \end{proof}\begin{remark}An analog of Eq.~\ref{eq:G_sinh} is the following evaluation:\begin{align}
\sum_{n=0}^\infty\frac{1}{(2n+1)^2\cosh^2\frac{(2n+1)\pi}{2}}=\frac{\pi^{2}}{16}-\frac{G}{2}.\label{eq:G_sosh}
\end{align}Such an identity originates from a special case of Eq.~\ref{eq:Epstein_Hecke2_expn}:\begin{align}
E^{\varGamma_0(2)}\left( \frac{1+i}{2},s \right)={}&\frac{1}{2^s-2^{-s}}\left[ E^{\varGamma_0(1)}(1+i,s)-\frac{E^{\varGamma_0(1)}\left( \frac{1}{1-i},s \right)}{2^{s}} \right]=\frac{E^{\varGamma_0(1)}(i,s)}{2^{s}+1}.\label{eq:Epstein_Hecke2_SL2Z_spec_pt}
\end{align}We leave the rest of the details (series expansion of $ E^{\varGamma_0(2)}((1+i)/2,2)$, back reference to Eq.~\ref{eq:Ramanujan_reflection}, etc.) to our readers.\eor\end{remark}

\section{Some Integral Formulations  for Ramanujan Series\label{sec:period_int}}Before we proceed, let us first recollect some facts about the modular lambda function $ \lambda(z),z\in\mathfrak H$  and the complete elliptic integral $\mathbf K(\sqrt{t}),t\in\mathbb C\smallsetminus[1,+\infty) $.

We define the  $ \varLambda$-group as $ \varLambda:=\left\{\left.\left(\begin{smallmatrix}2a+1&2b\\2c&2d+1\end{smallmatrix}\right)\right|a,b,c,d\in\mathbb Z,(2a+1)(2d+1)-4bc=1\right\}$. It characterizes the symmetry of the modular lambda function: we have $  \lambda(\hat\gamma z):=\lambda\left(\frac{az+b}{cz+d}\right)=\lambda(z)$ for any $ \hat \gamma=\left(\begin{smallmatrix}a&b\\ c&d\end{smallmatrix}\right)\in\varLambda$. Let \begin{align}
\Int\mathfrak D_\varLambda={}&\left\{ z\in\mathfrak H\left| |\R z|<1,\left|z+\frac{1}{2}\right|>\frac{1}{2},\left|z-\frac{1}{2}\right|>\frac{1}{2} \right. \right\}\label{eq:Int_D_Lambda}\end{align}be the interior of the fundamental domain for  the $ \varLambda$-group, then $ z\mapsto \lambda(z)$ induces a bijective map from $  \Int\mathfrak D_\varLambda$ to $ (\mathbb C\smallsetminus\mathbb R)\cup(0,1)$,  and we have  \begin{align}z={}&\frac{i\mathbf K(\sqrt{1-\lambda(z)})}{\mathbf K(\sqrt{\lambda(z)})},&z\in\Int\mathfrak D_\varLambda.\label{eq:lambda_K_rln}\end{align}The equation above entails the following relation \begin{align}
\lambda\left(-\dfrac{1}{2z+1}\right)&=1-\lambda(2z+1)\label{eq:lambda_inv}
\end{align}when $ 2z+1\in\Int\mathfrak D_\varLambda$ (see Eq.~\ref{eq:lambda_range_Epstein_Hecke4}).
As we combine the ``$ \lambda$-$ \mathbf K$ relation'' (Eq.~\ref{eq:lambda_K_rln}) with Landen's transformations\begin{align}
\mathbf K(\sqrt{1-\lambda})={}&\frac{2}{1+\sqrt{\lambda}}\mathbf K\left( \frac{1-\sqrt{\lambda}}{1+\sqrt{\lambda}} \right),\quad \lambda\in\mathbb C\smallsetminus(-\infty,0];\label{eq:Landen_1}\\\mathbf K(\sqrt{\lambda})={}&\frac{1}{1+\sqrt{\lambda}}\mathbf K\left( \frac{2\sqrt[4]{\lambda}}{1+\sqrt{\lambda}} \right),\quad |\lambda|<1,\label{eq:Landen_2}
\end{align}  we obtain the degree-2 transformations of the modular lambda function (see \cite[][\S135]{WeberVol3})\begin{align}\label{eq:double_half_lambda}\lambda(2z)=\left[ \frac{1-\sqrt{1-\lambda(z)}}{1+\sqrt{1-\lambda(z)}} \right]^2,\quad \lambda\left( \frac{\vphantom{1}z}{2} \right)=\frac{4\sqrt{\lambda(z)}}{[1+\sqrt{\lambda(z)}]^2},\end{align}applicable to $ z\in\Int\mathfrak D_\varLambda$.

\begin{proof}[Proof of Theorem \ref{thm:int_repn_Epstein_zeta}]We will only work out the details for Eq.~\ref{eq:K_int_repn2}, as the computations for Eqs.~\ref{eq:K_int_repn3}--\ref{eq:K_int_repn7} are essentially similar.

To prove   Eq.~\ref{eq:K_int_repn2}, it would suffice to establish the following identities for $ z/i>0$:  \begin{align}\sum_{n=0}^\infty\frac{1}{(2n+1)^{3}}\frac{1}{e^{(2n+1)\pi \frac{2z}{i}}-1}={}&-\int_0^{\lambda(z)}\frac{[\mathbf K(\sqrt{\smash[b]{t}})]^2}{16}\left[ \frac{i\mathbf K(\sqrt{1-t})}{\mathbf K(\sqrt{t})} -z\right]^2\frac{t\D t}{1-t}\notag\\={}&-\int_0^{\lambda(2z)}\frac{[\mathbf K(\sqrt{\smash[b]s})]^2}{8}\left[ \frac{i\mathbf K(\sqrt{1-s})}{\mathbf K(\sqrt{s})} -2z\right]^2\frac{\D s}{1-s},
\label{eq:E-3_E4_Eichler}\\
\sum_{n=0}^\infty\frac{1}{(2n+1)^{3}}\frac{1}{e^{(2n+1)\pi \frac{2z}{i}}+1}={}&-\int_0^{\lambda(2z)}\frac{[\mathbf K(\sqrt{\smash[b]t})]^2}{8}\left[ \frac{i\mathbf K(\sqrt{1-t})}{\mathbf K(\sqrt{t})} -2z\right]^2\D t.\label{eq:sum_Eichler_a}
\end{align}The verification of the aforementioned connections between series and integrals will occupy the rest of this proof.

If we set $ \sigma_k(n)=\sum_{d\mid n}d^k$, then we have $ \sum_{n=1}^\infty\frac{1}{n^3(e^{-2\pi inz}-1)}=\sum_{n=1}^\infty\frac{\sigma_3(n)}{n^3}e^{2\pi inz},z\in\mathfrak H$, and  the weight-4 Eisenstein series satisfies   $ E_4(\zeta):=1-240\sum_{n=1}^\infty\frac{n^3e^{2\pi in\zeta}}{1-e^{2\pi in\zeta}}=1-240\sum_{n=1}^\infty\sigma_3(n)e^{2\pi in\zeta},\zeta\in\mathfrak H$. Thus, we have \begin{subequations}\begin{align}
\sum_{n=1}^\infty\frac{1}{n^3(e^{-2\pi inz}-1)}={}&\frac{(2\pi i)^3}{480}\int_z^{i\infty}[1-E_4(\zeta)](\zeta-z)^2\D\zeta,\label{eq:Eichler_E4a}\\\sum_{n=1}^\infty\frac{1}{(2n)^3(e^{-4\pi inz}-1)}={}&\frac{(2\pi i)^3}{480}\int_z^{i\infty}[1-E_4(2\zeta)](\zeta-z)^2\D\zeta.\label{eq:Eichler_E4b}
\end{align} \end{subequations}(This argument is a standard trick in treating Eichler integrals. See, for example, \cite[][\S1]{GunMurtyRath2011}.)

Next, we use a variable substitution $ t=\lambda(\zeta)\in(0,1)$ for $ \zeta/i>0$, where $ \lambda(\cdot) $ is the modular lambda function, and (cf.~\cite[][Eqs.~2.3.32, 2.3.34, 2.3.25]{AGF_PartI} as well as Eqs.~\ref{eq:lambda_K_rln}, \ref{eq:Landen_1} and \ref{eq:double_half_lambda} given above){\allowdisplaybreaks\begin{align}
E_4(\zeta)={}&\left[ \frac{2\mathbf K(\sqrt{\smash[b]{\lambda(\zeta)}})}{\pi} \right]^{4}\{1-\lambda(\zeta)+[\lambda(\zeta)]^{2}\},\\E_4(2\zeta)={}&\left\{ \frac{[1+\sqrt{1-\lambda(\smash[b]{\zeta})}]\mathbf K(\sqrt{\smash[b]{\lambda(\zeta)}})}{\pi} \right\}^{4}\left\{1-\left[ \frac{1-\sqrt{1-\lambda(\smash[b]{\zeta})}}{1+\sqrt{1-\lambda(\smash[b]{\zeta})}} \right]^{2}+\left[ \frac{1-\sqrt{1-\lambda(\smash[b]{\zeta})}}{1+\sqrt{1-\lambda(\smash[b]{\zeta})}} \right]^{4}\right\},\\\zeta={}&\frac{i\mathbf K(\sqrt{1-\lambda(\smash[b]{\zeta})})}{\mathbf K(\sqrt{\lambda(\smash[b]{\zeta})})},\\\frac{\D}{\D t}\frac{\mathbf K(\sqrt{1-t})}{\mathbf K(\sqrt{t})}={}&-\frac{\pi}{4t(1-t)[\mathbf K(\sqrt{t})]^2}.
\end{align}}This allows us to recast the Eichler integrals over Eisenstein series (the right-hand sides of Eqs.~\ref{eq:Eichler_E4a}--\ref{eq:Eichler_E4b}) into  integrals whose integrands involve the products of two complete elliptic integrals of the first kind. Subsequently, the variable substitutions give rise to\begin{align}&
\int_z^{i\infty}[E_{4}(2\zeta)-E_4(\zeta)](\zeta-z)^2\D\zeta\notag\\={}&\frac{\pi i}{4}\int_0^{\lambda(z)}\left[ \frac{2\mathbf K(\sqrt{\smash[b]{t}})}{\pi} \right]^{4}\left[\left(1-t+\frac{t^{2}}{16}\right)-(1-t+t^{2})\right] \left[ \frac{i\mathbf K(\sqrt{1-t})}{\mathbf K(\sqrt{t})} -z\right]^2\frac{\D t}{t(1-t)[\mathbf K(\sqrt{t})]^2}.
\end{align} This proves the first equality in Eq.~\ref{eq:E-3_E4_Eichler}.

For  the second equality in Eq.~\ref{eq:E-3_E4_Eichler}, one uses a variable substitution $ t=\lambda(\zeta)=4\sqrt{\lambda(2\zeta)}/[1+\sqrt{\lambda(2\zeta)}]^2=4\sqrt{s}/(1+\sqrt{s})^2$ (Eq.~\ref{eq:double_half_lambda}) and appeals to Landen's transformation (Eqs.~\ref{eq:Landen_1} and \ref{eq:Landen_2}).

To prove Eq.~\ref{eq:sum_Eichler_a}, we write its left-hand side as\begin{align}
\sum_{n=0}^\infty\frac{1}{(2n+1)^{3}}\left[\frac{1}{e^{(2n+1)\pi \frac{2z}{i}}-1}-\frac{2}{e^{(2n+1)\pi \frac{4z}{i}}-1}\right].
\end{align} By  Eq.~\ref{eq:E-3_E4_Eichler}, such an infinite sum is equal to \begin{align}
-\int_0^{\lambda(2z)}\frac{[\mathbf K(\sqrt{\smash[b]s})]^2}{8}\left[ \frac{i\mathbf K(\sqrt{1-s})}{\mathbf K(\sqrt{s})} -2z\right]^2\frac{\D s}{1-s}+\int_0^{\lambda(2z)}\frac{[\mathbf K(\sqrt{\smash[b]{t}})]^2}{8}\left[ \frac{i\mathbf K(\sqrt{1-t})}{\mathbf K(\sqrt{t})} -2z\right]^2\frac{t\D t}{1-t},
\end{align}which can be identified with the right-hand side of  Eq.~\ref{eq:sum_Eichler_a}.
\end{proof}\begin{remark}For $ s\in\{2,3,4,5,7\} $ where there are no cusp forms of weight $ 2s$ on $ \varGamma_0(1)=SL(2,\mathbb Z)$, we have constructed integral representations for the automorphic Green's functions $ G_s^{\mathfrak H/\overline{\varGamma}_0(1)}(z,z')$, and hence for the Epstein zeta functions   $ E^{\varGamma_0(1)}(z,s)=\frac{1-2s}{4\pi}\lim_{ z'\to i\infty}G_s^{\mathfrak H/\overline{\varGamma}_0(1)}(z,z')(\I z')^{s-1}$, in \cite[][Propositions 2.1.2 and 2.3.2]{AGF_PartI}. Interested readers may wish to  check the numerical consistency between our formulae for $ E^{\varGamma_0(1)}(z,s),s\in\{2,3,4,5,7\}$ in \cite{AGF_PartI} and the integral representations for   $ E^{\varGamma_0(4)}(z,s),s\in\{2,3,4,5,7\}$  stated in  Theorem \ref{thm:int_repn_Epstein_zeta}. \eor\end{remark}\begin{remark}If we formally rewrite the integral representations in Eqs.~\ref{eq:K_int_repn2}--\ref{eq:K_int_repn7} as $ E^{\varGamma_0(4)}(-1/(4z),s)=\varPsi(2z+1)$, and spell out the symmetry $ \varPsi(2z+1)=\varPsi(-1/(2z+1))$ (cf.~Eq.~\ref{eq:lambda_inv} as well as the proof of Theorem~\ref{thm:Ramanujan_series}\ref{itm:Epstein_Hecke4}) in terms of integrals over products of complete elliptic integrals of the first kind, then we obtain some  integral representations of odd zeta values:{\allowdisplaybreaks\begin{align}
\zeta(3)={}&\frac{2}{7}\int_0^1[\mathbf K(\sqrt{1-t})]^2\D t,\label{eq:zeta3_int}\\\zeta(5)={}&\frac{8}{93}\int_0^1(1-2t)[\mathbf K(\sqrt{1-t})]^4\D t,\label{eq:zeta5_int}\\\zeta(7)={}&\frac{32}{5715}\int_0^1[2-17t(1-t)][\mathbf K(\sqrt{1-t})]^6\D t,\label{eq:zeta7_int}\\\zeta(9)={}&\frac{128}{160965}\int_0^1(1-2t)[1-31t(1-t)][\mathbf K(\sqrt{1-t})]^8\D t,\label{eq:zeta9_int}\\\zeta(13)={}&\frac{4096}{3831545025}\int_0^1(1-2t)[1-512t(1-t)+5461 t^2 (1-t)^2][\mathbf K(\sqrt{1-t})]^{12}\D t,\label{eq:zeta13_int}
\end{align}as well as some  vanishing identities:\begin{align}
0={}&\int_0^1[2-17t(1-t)][\mathbf K(\sqrt{1-t})]^2[\mathbf K(\sqrt{t})]^{4}\D t,\label{eq:vanish_int1}\\0={}&\int_0^1(1-2t)[1-31t(1-t)][\mathbf K(\sqrt{1-t})]^2[\mathbf K(\sqrt{t})]^{6}\D t,\label{eq:vanish_int2}\\0={}&\int_0^1(1-2t)[1-512t(1-t)+5461 t^2 (1-t)^2][\mathbf K(\sqrt{1-t})]^{2}[\mathbf K(\sqrt{t})]^{10}\D t,\label{eq:vanish_int3}\\0={}&\int_0^1(1-2t)[1-512t(1-t)+5461 t^2 (1-t)^2][\mathbf K(\sqrt{1-t})]^{4}[\mathbf K(\sqrt{t})]^{8}\D t.\label{eq:vanish_int4}
\end{align}Here, Eq.~\ref{eq:zeta3_int} is well known (cf.~\cite[][item 7.112.3]{GradshteynRyzhik}),  Eq.~\ref{eq:zeta5_int} appeared at the end of \cite{Zhou2013Pnu} as well as in \cite[][Eq.~42]{WanZucker2014}, while Eqs.~\ref{eq:zeta7_int} and \ref{eq:zeta9_int} are special cases of \cite[][Eqs.~43 and 44]{WanZucker2014}; the vanishing identities in Eqs.~\ref{eq:vanish_int1}--\ref{eq:vanish_int4} can be explained by certain trivial zeros of the Riemann zeta function \cite[][Eq.~33]{WanZucker2014}. If we set $ 2z+1=i$ in Eqs.~\ref{eq:K_int_repn2}--\ref{eq:K_int_repn7}, then we can represent (cf.~Eqs.~\ref{eq:Epstein_SL2Z_i_s}, \ref{eq:Epstein_Hecke2_add_form}, \ref{eq:Epstein_Hecke2_SL2Z_spec_pt})\begin{align} E^{\varGamma_0(4)}\left( \frac{1+i}{4} ,s\right)=\frac{E^{\varGamma_0(1)}(i,s)}{2^{s}(2^s+1)}=\frac{2^{1-s}\zeta(s)L(s,\chi_{-4})}{(2^{s}+1)\zeta(2s)}\end{align}as integrals over $ \mathbf K$:\begin{align}
\frac{3G}{2\pi^{2}}={}&\frac{21\zeta(3)}{4\pi^{3}}-\frac{3}{2\pi^{3}}\int_0^{1/2}[\mathbf K(\sqrt{t})]^2\left\{\left[ \frac{\mathbf K(\sqrt{1-t})}{\mathbf K(\sqrt{t})} \right]^{2}-1\right\}\D t,\label{eq:G_zeta3_K_int}\\\frac{105 \zeta (3)}{128 \pi ^3}={}&\frac{1395 \zeta (5)}{64 \pi ^5}+\frac{15}{8 \pi ^5}\int_0^{1/2}(2t-1)[\mathbf K(\sqrt{t})]^4\left\{\left[ \frac{\mathbf K(\sqrt{1-t})}{\mathbf K(\sqrt{t})} \right]^{2}-1\right\}^{2}\D t,\\\frac{105L(4,\chi_{-4})}{136 \pi ^4}={}&\frac{200025 \zeta (7)}{2176 \pi ^7}-\frac{70}{136 \pi ^7}\int_0^{1/2}[2-17t(1-t)][\mathbf K(\sqrt{t})]^6\left\{\left[ \frac{\mathbf K(\sqrt{1-t})}{\mathbf K(\sqrt{t})} \right]^{2}-1\right\}^{3}\D t,\\\frac{4725 \zeta (5)}{8192 \pi ^5}={}&\frac{50703975 \zeta (9)}{126976 \pi ^9}+\frac{315}{992 \pi ^9}\int_0^{1/2}(2t-1)[1-31t(1-t)][\mathbf K(\sqrt{t})]^8\left\{\left[ \frac{\mathbf K(\sqrt{1-t})}{\mathbf K(\sqrt{t})} \right]^{2}-1\right\}^{4}\D t,\\\frac{8243235 \zeta (7)}{22544384 \pi ^7}={}&\frac{11506129710075 \zeta (13)}{1431568384 \pi ^{13}}+\frac{3003}{349504 \pi ^{13}}\int_0^{1/2}(2t-1)[1-512t(1-t)+5461 t^2 (1-t)^2]\times\notag\\{}&\times[\mathbf K(\sqrt{t})]^{12}\left\{\left[ \frac{\mathbf K(\sqrt{1-t})}{\mathbf K(\sqrt{t})} \right]^{2}-1\right\}^{6}\D t.
\end{align}}Here, the result in Eq.~\ref{eq:G_zeta3_K_int} can be deduced from the  Legendre differential equations (cf.~\cite[][items 7.112.3 and 7.112.5]{GradshteynRyzhik}).  \eor\end{remark}

\begin{remark}As we are mainly concerned with the Eichler integrals related to automorphic Green's functions  $ G_s^{\mathfrak H/PSL(2,\mathbb Z)}(z,z')$ when there are no cusp forms of weight $ 2s$ on $ SL(2,\mathbb Z)$, we have only displayed results for $s\in\{2,3,4,5,7\} $ in Eqs.~\ref{eq:zeta3_int}--\ref{eq:zeta13_int}.  However, the methods for converting Ramanujan series to integral forms are not necessarily limited to such special values of $s$, so the integral representations for other odd zeta values can still be constructed in a similar vein.

In fact, as recently pointed out by Wan and Zucker \cite[][Theorem 1]{WanZucker2014}, for each positive integer $ n$, there exists a polynomial function $ f_n(t)$ with rational coefficients, satisfying $ f_n(t)=(-1)^{n+1}f_{n}(1-t)$, $ \deg f_n\leq n-1$ and $\frac{1}{\zeta(2n+1)}\int_0^1f_n(t)[\mathbf K(\sqrt t)]^{2n}\D t\in\mathbb Q.$ However, we note that for positive integers $ n\notin\{1,2,3,4,6\}$, where there exist cusp forms of weights $ 2n+2$ on  $ SL(2,\mathbb Z)$,  the polynomials  $ f_n(t)$ meeting the aforementioned qualifications are not necessarily unique up to a multiplicative constant. For example, one can superimpose any rational multiples of a vanishing identity $\int_0^1[8-1049t(1-t)][\mathbf K(\sqrt t)]^{10}\D t=0$ onto a formula of Wan and Zucker \cite[][Eq.~45]{WanZucker2014} $ \zeta(11)=\frac{512}{29016225}\int_0^1[2-259t(1-t)+1382t^{2}(1-t)^2][\mathbf K(\sqrt t)]^{10}\D t$ to obtain alternative integral representations of $ \zeta(11)$. Such a lack of uniqueness in certain integral representations of odd zeta values can be systematically  explained in the language of newforms, which we hope to address in a separate article.    \eor\end{remark}

\bibliography{Epstein}

\begin{thebibliography}{10}

\bibitem{RLN4}
{\sc Andrews, G.~E., and Berndt, B.~C.}
\newblock {\em Ramanujan's Lost Notebook (Part IV)}.
\newblock Springer-Verlag, New York, NY, 2013.

\bibitem{ApostolVol2}
{\sc Apostol, T.~M.}
\newblock {\em Modular Functions and Dirichlet Series in Number Theory},
  vol.~41 of {\em Graduate Texts in Mathematics}.
\newblock Springer-Verlag, New York, NY, 1976.

\bibitem{ET1}
{\sc Bateman, H.}
\newblock {\em Table of Integral Transforms}, vol.~I.
\newblock McGraw-Hill, New York, NY, 1954.
\newblock (compiled by staff of the Bateman Manuscript Project: Arthur
  Erd{\'e}lyi, Wilhelm Magnus, Fritz Oberhettinger, Francesco G. Tricomi, David
  Bertin, W. B. Fulks, A. R. Harvey, D. L. Thomsen, Jr., Maria A. Weber and E.
  L. Whitney).

\bibitem{Berndt1978}
{\sc Berndt, B.~C.}
\newblock Analytic {E}isenstein series, theta-functinos, and series relations
  in the spirit of {R}amanujan.
\newblock {\em J. Reine Angew. Math. 304\/} (1978), 332--365.

\bibitem{RN2}
{\sc Berndt, B.~C.}
\newblock {\em Ramanujan's Notebooks (Part II)}.
\newblock Springer-Verlag, New York, NY, 1989.

\bibitem{Berndt1992}
{\sc Berndt, B.~C.}
\newblock On a certain theta-function in a letter of {R}amanujan from {F}itzroy
  {H}ouse.
\newblock {\em Ganita 43\/} (1992), 33--43.

\bibitem{BersJohnSchechter1964}
{\sc Bers, L., John, F., and Schechter, M.}
\newblock {\em Partial Differential Equations}, vol.~III of {\em Lectures in
  Applied Mathematics (Proceedings of the Summer Seminar, Boulder, Colorado,
  1957)}.
\newblock Interscience Publishers, New York, NY, 1964.

\bibitem{GradshteynRyzhik}
{\sc Gradshteyn, I.~S., and Ryzhik, I.~M.}
\newblock {\em Table of Integrals, Series, and Products}, 7th~ed.
\newblock Academic Press, Burlington, MA, 2007.
\newblock (Translated from Russian by Scripta Technica, Inc., edited by Alan
  Jeffrey and Daniel Zwillinger).

\bibitem{GrossZagierII}
{\sc Gross, B., Kohnen, W., and Zagier, D.}
\newblock Heegner points and derivatives of ${L}$-series. {II}.
\newblock {\em Math.\ Ann. 278\/} (1987), 497--562.

\bibitem{GrossZagier1985}
{\sc Gross, B.~H., and Zagier, D.~B.}
\newblock On singular moduli.
\newblock {\em J.\ Reine Angew.\ Math. 355\/} (1985), 191--220.

\bibitem{GrossZagierI}
{\sc Gross, B.~H., and Zagier, D.~B.}
\newblock Heegner points and derivatives of ${L}$-series.
\newblock {\em Invent.\ Math. 84\/} (1986), 225--320.

\bibitem{GunMurtyRath2011}
{\sc Gun, S., Murty, M.~R., and Rath, P.}
\newblock Transcendental values of certain {E}ichler integrals.
\newblock {\em Bull. London Math. Soc. 43\/} (2011), 939--962.

\bibitem{Hejhal1983}
{\sc Hejhal, D.~A.}
\newblock {\em The {S}elberg Trace Formula for ${PSL}(2,\mathbb{R})$ (Volume
  2)}, vol.~1001 of {\em Lecture Notes in Mathematics}.
\newblock Springer-Verlag, 1983.

\bibitem{WanZucker2014}
{\sc Wan, J.~G., and Zucker, I.~J.}
\newblock Integrals of ${K}$ and ${E}$ from lattice sums.
\newblock \texttt{arXiv:1410.7081v1} [math.NT], 2014.

\bibitem{Watson1995Bessel}
{\sc Watson, G.~N.}
\newblock {\em A Treatise on the Theory of {B}essel Functions}, 2nd~ed.
\newblock Cambridge Mathematical Library. Cambridge University Press,
  Cambridge, UK, 1995.

\bibitem{WeberVol3}
{\sc Weber, H.}
\newblock {\em \selectlanguage{german}{L}ehrbuch der {A}lgebra. \emph{Bd. 3:}
  {E}lliptische {F}unktionen und algebraische
  {Z}ahlen\selectlanguage{english}}.
\newblock Friedrich Vieweg und Sohn, Braunschweig, Germany, 1908.
\newblock (reprinted as \selectlanguage{german}\textit{{L}ehrbuch der
  {A}lgebra}\selectlanguage{english} Vol.III by Chelsea, NY, 1961).

\bibitem{Williams1999}
{\sc Williams, K.~S.}
\newblock Some {L}ambert series expansions of products of theta functions.
\newblock {\em Ramanujan J. 3\/} (1999), 367--384.

\bibitem{AGF_PartI}
{\sc Zhou, Y.}
\newblock {K}ontsevich--{Z}agier integrals for automorphic {G}reen's functions.
  {I}.
\newblock {\em Ramanujan J.\/}.
\newblock \url{doi:10.1007/s11139-014-9663-7} (to appear) {\tt
  arXiv:1312.6352v4} [math.CA].

\bibitem{Zhou2013Pnu}
{\sc Zhou, Y.}
\newblock Legendre functions, spherical rotations, and multiple elliptic
  integrals.
\newblock {\em Ramanujan J. 34\/} (2014), 373--428.

\end{thebibliography}
\bibliographystyle{acm}
\end{document}